\documentclass[11pt]{article}

\usepackage[a4paper,margin=1in]{geometry}
\usepackage{amsmath,amssymb,amsthm,mathtools}
\usepackage{enumitem}
\usepackage{graphicx}
\usepackage[colorlinks=true,linkcolor=red,citecolor=red,urlcolor=red]{hyperref}

\newtheorem{theorem}{Theorem}[section]
\newtheorem{proposition}[theorem]{Proposition}
\newtheorem{lemma}[theorem]{Lemma}

\theoremstyle{remark}

\newcommand{\R}{\mathbb{R}}

\title{\bf Isolated singularities of weak solutions to the Serrin-critical \(p\)-Laplace equation}
\author{Meiqing Xu \and Hui Yang}
\date{\today}

\begin{document}
\maketitle

\begin{abstract}
We classify isolated singularities of nonnegative weak solutions to the Serrin-critical quasilinear Lane-Emden equation
\[
   -\Delta_pu=u^\frac{n(p-1)}{n-p}\qquad \text{in }B_1\setminus\{0\},
\]
where $1<p<n$, $ \Delta_p$ is the classical $p$-Laplacian operator and $B_1\setminus\{0\} \subset \mathbb{R}^n$ is the punctured unit ball. We prove that either the origin is a removable singularity or $u$ satisfies
\[
 \lim_{x\to0}
 |x|^{\frac{n-p}{p-1}}
 \left(\log\frac1{|x|}\right)^{\frac{n-p}{p(p-1)}}u(x)
 =
 \left[
 \frac{n-p}{p}
 \left(\frac{n-p}{p-1}\right)^{p-1}
 \right]^{\frac{n-p}{p(p-1)}}.
\]
This extends the result of Aviles (Comm. Math. Phys. 1987: 177--192) from the semilinear case $p=2$ to the full range $1<p<n$, without assuming radial symmetry.
We develop a new BV-based argument
for weak solutions. We also give a direct construction of positive radial singular solutions with the same asymptotic behavior.
\end{abstract}

\section{Introduction}
\label{sec:introduction}

The classical $p$-Laplace operator
\[
 \Delta_pu:=\operatorname{div}\bigl(|\nabla u|^{p-2}\nabla u\bigr),
 \qquad p>1,
\]
is a basic model for nonlinear media and arises in a wide range of
problems involving nonlinear diffusion, non-Newtonian fluids, and
nonlinear elasticity
\cite{Ball1977,DiBenedettoHerrero1990,MalekRajagopalRuzicka1995}.
We consider the positive solutions of the quasilinear Lane--Emden equation
\[
 -\Delta_pu=u^s,
 \qquad u\ge0,\quad 1<p<n,\quad s>p-1.
\]
For $1<p<n$, denote the Serrin and Sobolev exponents by
\[
 q_{\mathrm{Ser}}:=\frac{n(p-1)}{n-p}
 \quad \text{and} \quad
 q_{\mathrm{Sob}}:=\frac{np}{n-p}-1,
\]
respectively.

When $p=2$ and $n\ge3$, the equation reduces to $-\Delta u=u^s$. In the Sobolev subcritical and critical ranges, the classification of positive entire solutions has been completely established; see \cite{CaffarelliGidasSpruck1989,ChenLi1991,GidasNiNirenberg1979,GidasSpruck1981,LiZhang2003}. Since the equation is conformally invariant in the critical case, the moving-plane and moving-sphere methods play a central role \cite{CaffarelliGidasSpruck1989,ChenLi1991,LiZhang2003,LiZhu1995}. For isolated singularities, Lions \cite{Lions1980} studied the range
$1<s<n/(n-2)$, below the Serrin exponent, while
Gidas--Spruck \cite{GidasSpruck1981} studied the range
$n/(n-2)<s<(n+2)/(n-2)$, between the Serrin and Sobolev exponents.
In the first range, a positive solution in a punctured ball either
extends across the origin or satisfies
\[
 u(x)\sim c|x|^{2-n}
 \qquad\text{as }x\to0
\]
for some $c>0$ depending on the solution. In the second range, every nonremovable positive solution satisfies
\[
 u(x)\sim C_{n,s}|x|^{-\frac{2}{s-1}}
 \qquad\text{as }x\to0.
\]
At the Serrin exponent $s=n/(n-2)$, the power $|x|^{-2/(s-1)}$ becomes $|x|^{2-n}$, which is harmonic away from the origin. At this endpoint, nonremovable solutions are no longer asymptotic to a positive multiple of $|x|^{2-n}$. Aviles \cite{Aviles1983,Aviles1987} proved the exact limit
\[
 \lim_{x\to0}|x|^{n-2}
 \left(\log\frac1{|x|}\right)^{\frac{n-2}{2}}u(x)
 =\left(\frac{(n-2)^2}{2}\right)^{\frac{n-2}{2}}
\]
for every nonremovable nonnegative solution.
%
At the Sobolev exponent $s=(n+2)/(n-2)$, asymptotic radial symmetry
and refined asymptotics near isolated singularities were established
in \cite{CaffarelliGidasSpruck1989,HanLiLiCPAM,KorevaarMazzeoPacardSchoen1999,Li1996}, while isolated singularities
for Serrin-critical differential inequalities were studied by Taliaferro
\cite{Taliaferro2001}.

When $p\ne2$, Serrin--Zou \cite{SerrinZou2002} proved
nonexistence of positive entire solutions for
$p-1<s<q_{\mathrm{Sob}}$. At the Sobolev critical exponent $s=q_{\mathrm{Sob}}$, the
classification of positive finite-energy solutions was established
through the works \cite{DamascelliMerchanMontoroSciunzi2014,Sciunzi2016,Vetois2016}. Further classification results without global assumptions were recently obtained in  \cite{CatinoMonticelliRoncoroni2023,Ou2025,Vetois2024,Zhang2026}.
The study of isolated singularities for quasilinear elliptic equations
goes back to the fundamental work of Serrin
\cite{Serrin1964,Serrin1965}. Kichenassamy--V\'eron
\cite{KichenassamyVeron1986} established precise results for
singular $p$-harmonic functions, while Bidaut-V\'eron
\cite{BidautVeron1989} developed local estimates, integrability,
and removability results for quasilinear equations of
Emden--Fowler type. Nonlinear potential estimates of
Kilpel\"ainen--Mal\'y \cite{KilpelainenMaly1994} provide another
important tool for this analysis.

For the equation $-\Delta_pu=u^s$, Guedda--V\'eron
\cite{GueddaVeron1988} classified the isolated singularities of positive {\it radial} solutions throughout $p-1<s<q_{\mathrm{Sob}}$.
Their results show that a radial solution either extends across the
origin or has one of the following asymptotic behaviors:
\[
 u(r)\sim
 \begin{cases}
 c\,r^{-\frac{n-p}{p-1}},
   & p-1<s<q_{\mathrm{Ser}},\\[2mm]
 A_{n,p}\,r^{-\frac{n-p}{p-1}}
   \left(\log\dfrac1r\right)^{-\frac{n-p}{p(p-1)}},
   & s=q_{\mathrm{Ser}},\\[2mm]
 C_{n,p,s}\,r^{-\frac{p}{s-p+1}},
   & q_{\mathrm{Ser}}<s<q_{\mathrm{Sob}},
 \end{cases}
 \qquad \text{as } r \to 0.
\]
Here $c>0$ depends on the solution, $A_{n,p}$ is the constant
in \eqref{eq:sharpconstant}, and
\[
 C_{n,p,s}:=
 \left[
 \left(\frac{p}{s-p+1}\right)^{p-1}
 \left(n-p-\frac{p(p-1)}{s-p+1}\right)
 \right]^{\frac1{s-p+1}}.
\]
In particular, both the logarithmic exponent and its exact coefficient were already known for radial solutions in the Serrin-critical case. In the range $p-1<s<q_{\mathrm{Ser}}$, Bidaut-V\'eron \cite{BidautVeron1989} obtained the same
fundamental-solution asymptotics without assuming radial symmetry.
Du--Zhou \cite{DuZhou2024} extended the radial analysis to the weighted equation $-\Delta_pu=|x|^\alpha u^s$ and proved a corresponding nonradial result in the weighted subcritical Serrin regime. More directly related to the present problem, Ma--Zang
\cite{MaZang2025} considered positive $C^2$ solutions of the quasilinear differential inequality
\[
 0\le-\Delta_pu\le\tau u^{q_{\mathrm{Ser}}},
 \qquad \tau>0,
\]
without assuming radial symmetry. They proved asymptotic spherical
symmetry and classified the singular behavior into three cases:
removability, asymptotic equivalence to a positive multiple of the
fundamental solution, or a nonremovable singularity of smaller order.
In the last case, their result gives
\[
 \lim_{x\to0}|x|^{\frac{n-p}{p-1}}u(x)=0,
 \qquad
 \liminf_{x\to0}|x|^{\frac{n-p}{p-1}}
 \left(\log\frac1{|x|}\right)^{\frac{n-p}{p(p-1)}}u(x)
 \ge \tau^{-\frac{n-p}{p(p-1)}}A_{n,p}.
\]
Thus their theorem determines a sharp logarithmic lower bound, but
does not assert an exact logarithmic limit for the differential
inequality.

For the equation $-\Delta_pu=u^{q_{\mathrm{Ser}}}$ with a singularity at the origin, we shall prove that every nonnegative singular weak solution satisfies
\[
 \lim_{x\to0}|x|^{\frac{n-p}{p-1}}
 \left(\log\frac1{|x|}\right)^{\frac{n-p}{p(p-1)}}u(x)=A_{n,p}.
\]
Compared with the radial results
of Guedda--V\'eron \cite{GueddaVeron1988}, our conclusion applies to solutions without any prescribed symmetry. Moreover, compared with Ma--Zang \cite{MaZang2025}, we obtain the matching
sharp upper bound for the equation and hence the full limit, while working in the class of weak solutions.
For convenience, throughout the paper we set
\begin{equation}\label{eq:parameters}
 q:=q_{\mathrm{Ser}} = \frac{n(p-1)}{n-p}.
\end{equation}
Define
\begin{equation}\label{eq:sharpconstant}
 A_{n,p}
 :=
 \left[
 \frac{n-p}{p}
 \left(\frac{n-p}{p-1}\right)^{p-1}
 \right]^{\frac{n-p}{p(p-1)}}.
\end{equation}
For $1<p<n$, a nonnegative function
$u\in W^{1,p}_{\mathrm{loc}}(B_1\setminus\{0\})$ is called a \emph{weak solution} of
\begin{equation}\label{eq:main}
 -\Delta_pu=u^q
 \qquad\text{in }B_1\setminus\{0\},
\end{equation}
if
\[
 \int_{B_1\setminus\{0\}}
 |\nabla u|^{p-2}\nabla u\cdot\nabla\varphi\,dx
 =
 \int_{B_1\setminus\{0\}}u^q\varphi\,dx
\]
for every $\varphi\in C_c^\infty(B_1\setminus\{0\})$.
Density extends the identity from
$C_c^\infty(B_1\setminus\{0\})$ to every test function in $W^{1,p}_0(D)$ for every $D\Subset B_1\setminus\{0\}$.

\begin{theorem}[Classification of singularities in the Serrin-critical case]\label{thm:main}
Let $1<p<n$ and let $u\in W^{1,p}_{\mathrm{loc}}(B_1\setminus\{0\})$ be a nontrivial nonnegative weak solution of \eqref{eq:main}.
Then precisely one of the following alternatives holds:
\begin{enumerate}[label=\textup{(\roman*)}]
 \item $0$ is removable: $u$ extends to a weak solution in $B_1$, and the
       extension belongs to $C^{1,\alpha}_{\mathrm{loc}}(B_1)$ for some
       $\alpha\in(0,1)$;
 \item $0$ is nonremovable and
 \begin{equation}\label{eq:final-limit}
  \lim_{x\to0}
 |x|^{\frac{n-p}{p-1}}
 \left(\log\frac1{|x|}\right)^{\frac{n-p}{p(p-1)}}u(x)
  =A_{n,p}.
 \end{equation}
\end{enumerate}
\end{theorem}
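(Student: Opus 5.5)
We first establish a priori bounds and then reduce the problem to a one-dimensional flow.

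The first step is the universal Serrin-type bound
\[
u(x)\le C|x|^{-\frac{n-p}{p-1}} \qquad \text{near } 0.
\]
We get it from the weak Harnack inequality on annuli together with the Bidaut-V\'eron local integrability estimates. The key input is that $u^q\in L^1_{\mathrm{loc}}(B_1)$ and that $-\Delta_p u=u^q+\mu\delta_0$ in $\mathcal D'(B_1)$ for some $\mu\ge0$. This follows by testing with cut-offs $\eta_\varepsilon(|x|)$ and passing $\varepsilon\to0$ with monotone convergence.

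Next we show $\mu=0$. If $\mu>0$, Kilpel\"ainen--Mal\'y potential estimates give $u\ge c|x|^{-(n-p)/(p-1)}$. Then $u^q\ge c|x|^{-n}$, which is not integrable, a contradiction. So $\mu=0$, and $u$ is a nonnegative distributional solution in $B_1$ with $u^q\in L^1$.

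Removability versus singularity is then decided by whether $u\in L^\infty$ near $0$. If it is, standard De Giorgi--Moser theory and DiBenedetto/Tolksdorf regularity give alternative (i). Otherwise we pass to Emden--Fowler-type variables $t=\log(1/|x|)$, $w(t,\theta)=|x|^{\frac{n-p}{p-1}}u$. The Harnack inequality on dyadic annuli (rescaled, using the bound above) gives $\sup_{\partial B_r}u\le C\inf_{\partial B_r}u$. A Kichenassamy--V\'eron type gradient estimate then gives asymptotic radial symmetry: $\operatorname{osc}_{\partial B_r}w/\sup w\to0$.

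The heart of the argument is a BV-type flux analysis. Define the flux
\[
F(r)=-\int_{\partial B_r}|\nabla u|^{p-2}\partial_\nu u\,d\sigma .
\]
This is a BV function of $r$ in the weak setting, since its distributional derivative is the measure $-u^q\,dx$ restricted to spheres. Consequently it is monotone, and $F(r)=\int_{B_r}u^q\to0$ as $r\to0$ because $\mu=0$.

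The nonlinear relation comes from asymptotic symmetry. Up to $o(1)$ relative errors, $F(r)\approx\omega_{n-1}r^{n-1}|u'(r)|^{p-1}$ with $u(r)$ the spherical mean. Combining this with $F'(r)=\omega_{n-1}r^{n-1}u^q$, and writing $u=r^{-\frac{n-p}{p-1}}v(t)$, we get to leading order
\[
\dot v\approx -\frac{p-1}{n-p}\Bigl(\tfrac{p-1}{n-p}\Bigr)^{p-1}\frac{v^{q}}{v^{p-2}}\cdots,
\]
that is, $\frac{d}{dt}v^{-\frac{p(p-1)}{n-p}}\to$ an explicit constant. Integrating in $t$ yields $v(t)\sim A_{n,p}t^{-\frac{n-p}{p(p-1)}}$. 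The constant must match \eqref{eq:sharpconstant}; we check this against the radial computation of Guedda--V\'eron.

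To make this rigorous without $C^2$ regularity, we work with integrated quantities: the spherical averages $\bar u(r)$ and the flux $F(r)$. We prove two-sided inequalities $F=(1+o(1))\omega_{n-1}r^{n-1}|\bar u'|^{p-1}$ using $C^{1,\alpha}$ estimates on rescaled annuli, which are available since the equation is regular away from $0$. Under the rescaling $u_r(y)=r^{\frac{n-p}{p-1}}u(ry)/\bar w(r)$, the rescaled functions converge to constants. This is because $w\to0$, so the right-hand side is lower order, and the limits are $p$-harmonic, bounded, and symmetric. Ruling out the alternative $w\not\to0$ is exactly what $\mu=0$ provides.

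The main obstacle is this quantitative step. Asymptotic symmetry only gives $o(1)$ errors, but integrating the ODE over $t\in[1,T]$ requires errors that are $o(1)$ relative to the logarithmically decaying quantity $v^{q-p+1}\sim 1/t$. We will handle it by a barrier comparison. For any $\varepsilon>0$, the explicit radial sub- and supersolutions $(A_{n,p}\pm\varepsilon)|x|^{-\frac{n-p}{p-1}}(\log\frac1{|x|})^{-\frac{n-p}{p(p-1)}}$, with lower-order corrections, are constructed as in the paper's radial construction. We compare $u$ with them on annuli $r_k<|x|<r_0$ using the weak comparison principle, with boundary data controlled by the Harnack ratio $\to1$. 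Matching the flux at $r_k\to0$ then forces $\limsup$ and $\liminf$ to equal $A_{n,p}$. The Ma--Zang lower bound serves as a consistency check for the liminf side.
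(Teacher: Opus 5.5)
Your preliminary steps (universal bound, integrability of $u^q$, $\mu=0$, the bounded/unbounded dichotomy, asymptotic radial symmetry via Harnack and blow-up) parallel the paper and are fine at the level of detail given. The gap is in the last step, which is where the whole difficulty lies. The barrier comparison you propose cannot close it, for three reasons.

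First, there is no weak comparison principle for $-\Delta_p v=v^q$ on annuli. The right-hand side is increasing in $v$. If $v>0$ solves the equation with zero boundary values on an annulus (such solutions exist, e.g.\ since $q<q_{\mathrm{Sob}}$), then $\lambda v$ with $\lambda>1$ is a subsolution with the same boundary values lying strictly above the solution $v$. Near $0$ you might hope to recover comparison from a Hardy-type inequality, but for $p\neq2$ this needs nondegeneracy of both gradients, and you do not supply that argument.

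Second, and decisively, your barriers have the wrong type. With $\Phi:=|x|^{-\frac{n-p}{p-1}}(\log\frac1{|x|})^{-\frac{n-p}{p(p-1)}}$ one has $-\Delta_p(A_{n,p}\Phi)=(1+o(1))(A_{n,p}\Phi)^q$ as $x\to0$. By homogeneity, $-\Delta_p(\lambda A_{n,p}\Phi)=\lambda^{p-1}(1+o(1))(A_{n,p}\Phi)^q$, whereas $(\lambda A_{n,p}\Phi)^q=\lambda^q(A_{n,p}\Phi)^q$. Since $q>p-1$, $(A_{n,p}+\varepsilon)\Phi$ is a strict \emph{subsolution} and $(A_{n,p}-\varepsilon)\Phi$ a strict \emph{supersolution} near $0$. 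Lower-order corrections cannot reverse a leading-order sign. So comparison, wherever it holds, gives $u\ge(A_{n,p}+\varepsilon)\Phi$ or $u\le(A_{n,p}-\varepsilon)\Phi$, the opposite of what you want. In fact the argument of Lemma~\ref{lem:sharp-upper} shows that every radial supersolution $V$ with $V\to\infty$ and $|x|^{\frac{n-p}{p-1}}V\to0$ satisfies $\limsup_{x\to0}V/\Phi\le A_{n,p}$. The upper barrier you need therefore does not exist.

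Third, even with correctly oriented barriers you would need the inequality on the inner sphere $|x|=r_k$, which is the unknown itself. A Harnack ratio tending to $1$ controls only the angular oscillation of $u$ on $\partial B_{r_k}$, not its size relative to $\Phi(r_k)$, and ``matching the flux'' does not supply this.

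The missing idea is how to avoid approximate ODEs altogether. By rotation invariance, $\underline u(|x|)=\min_{Q\in O(n)}u(Qx)$ and $\overline u(|x|)=\max_{Q\in O(n)}u(Qx)$ are an \emph{exact} radial viscosity (hence weak) supersolution and subsolution (Lemma~\ref{lem:envelope-inequalities}). In the Emden--Fowler variables this gives error-free measure inequalities $-DP_-\ge\underline w^q\,dt$ and $-DP_+\le\overline w^q\,dt$. Take the flux as the primary unknown. Monotonicity of $P_-$ and variation of constants give $\underline w\ge(1-e^{-L})P_-^{1/(p-1)}$, hence $D(P_-^{-\delta})\ge\delta(1-e^{-L})^q\,dt$ and the sharp upper bound for $\underline w$. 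A trapping argument for $R=-\overline w'/\overline w^{q-p+2}$ gives the sharp lower bound for $\overline w$. The qualitative fact $\overline u/\underline u\to1$ enters only at the end, to squeeze, so no rate is ever needed.

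Your obstacle is also partly an artifact of using $\dot v$ as the unknown. With the exact flux $F(r)=\int_{B_r}u^q$ as the unknown, relative $o(1)$ errors are harmless. You would then need $F=(1+o(1))|\partial B_r|\bigl(\frac{n-p}{p-1}\,\bar u/r\bigr)^{p-1}$, i.e.\ that every blow-up limit of $u(ry)/\bar u(r)$ is exactly $|y|^{-\frac{n-p}{p-1}}$ rather than $A|y|^{-\frac{n-p}{p-1}}+B$ with $B>0$. Your assertion that the rescalings converge to constants is either false as written (those limits are not constant) or assumes precisely this unproved fact.
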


We complement this classification with a direct radial construction,
independent of Theorem~\ref{thm:main}, which shows that the nonremovable
alternative is attained.

\begin{theorem}[Existence of logarithmic singularities]
\label{prop:radial-existence}
Let $1<p<n$, and let $q$ and $A_{n,p}$ be given by
\eqref{eq:parameters} and \eqref{eq:sharpconstant}, respectively. Then there exists a positive radial solution $u(x)=U(|x|)\in C^\infty(B_1\setminus\{0\})$
of \eqref{eq:main} satisfying \eqref{eq:final-limit}.
\end{theorem}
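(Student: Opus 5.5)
We construct the solution by an Emden--Fowler change of variables and a shooting argument in the radial ODE. With $r=|x|$, the radial equation is
\[
 -\bigl(r^{n-1}|U'|^{p-2}U'\bigr)'=r^{n-1}U^q .
\]
Set $t=\log\frac1r$ and $U(r)=r^{-\frac{n-p}{p-1}}v(t)$. Since $q=\frac{n(p-1)}{n-p}$, the weight exactly balances: $r^{n-1}U^q = r^{-1}v^q$, and $r^{n-1}|U'|^{p-2}U'$ becomes a function of $t$ alone. Writing $\gamma=\frac{n-p}{p-1}$, we have $-rU'=r^{-\gamma}(\gamma v+\dot v)$, and the equation turns into the autonomous but degenerate first-order system
\[
 w:=(\gamma v+\dot v)^{p-1},\qquad \dot w=-v^q ,
\]
valid as long as $\gamma v+\dot v>0$. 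Equivalently $\frac{d}{dt}\bigl[(\gamma v+\dot v)^{p-1}\bigr]=-v^q$.

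The ansatz $v\approx A t^{-1/(p-1)\cdot\frac{n-p}{p}}$, i.e. $v\sim A t^{-\beta}$ with $\beta=\frac{n-p}{p(p-1)}$, gives $w\approx(\gamma A)^{p-1}t^{-\beta(p-1)}$. Then $\dot w\approx-\beta(p-1)(\gamma A)^{p-1}t^{-\beta(p-1)-1}$. Since $q\beta=\frac{n}{p(p-1)}\cdot\frac{p-1}{1}\cdot\frac{1}{?}$, we check directly: $q\beta=\frac{n}{p}$ and $\beta(p-1)+1=\frac{n-p}{p}+1=\frac np$. The exponents match, and balancing the coefficients gives
\[
 A^{q-p+1}=\beta(p-1)\gamma^{p-1}=\frac{n-p}{p}\Bigl(\frac{n-p}{p-1}\Bigr)^{p-1}.
\]
Since $q-p+1=\frac{p(p-1)}{n-p}$, this yields exactly $A=A_{n,p}$. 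This confirms the target asymptotics.

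\textbf{Construction.} Rather than relying on heuristics, we solve a backward problem that makes the asymptotics automatic.

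1. \emph{Reduction to a first-order equation for $w$.} Since $w$ is decreasing and positive, we eliminate $v$ as follows. Treat $v=w^{1/(p-1)}/\gamma$ to leading order, so that the slowly varying regime gives $\dot w\approx-\gamma^{-q}w^{q/(p-1)}$. Note that $q/(p-1)=\frac{n}{n-p}>1$. This one-dimensional ODE explicitly yields $w\sim c\,t^{-(n-p)/p}$, matching the ansatz.

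2. \emph{Local existence for large $t$.} We make this rigorous with a fixed-point argument on $[T,\infty)$. Write $v=At^{-\beta}(1+h(t))$ and derive integral equations for $h$ and for the error in $w$. Integrate $\dot w=-v^q$ from $t$ to $\infty$ with $w(\infty)=0$, and solve the linear relation $\dot v+\gamma v=w^{1/(p-1)}$ via
\[
 v(t)=\int_{-\infty}^{t}e^{-\gamma(t-s)}w(s)^{1/(p-1)}\,ds ,
\]
where we use the stable direction for the fast variable. In the weighted space $\sup_{t\ge T}(\log t)^{-1}t\,|h|$ (or simply $\sup|h|$, small), the map is a contraction for large $T$. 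The reason is that the linearization around the power profile is governed by $t^{-1}$ corrections and a fast exponentially stable mode. The main obstacle is this step: the slow variable has a neutral-type linearization. Expect $h\to0$ only like $O(\log t/t)$, and choose the function space accordingly. As a fallback we would use a monotonicity/phase-plane argument: with $z=w t^{\beta(p-1)}$ as a new variable in $\log t$, the system becomes asymptotically autonomous with a hyperbolic equilibrium, and its stable manifold gives the solution.

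3. \emph{Continuation to $r=1$.} We extend the solution backward in $t$ down to $t=0$. Since $U$ solves the original ODE, $r^{n-1}|U'|^{p-2}U'$ is increasing in $r$ from its value near $0$, which is negative. As long as $U>0$ the solution continues. To guarantee positivity on all of $(0,1)$, we use scaling. If $U$ solves the equation, then so does $U_\lambda(r)=\lambda^{\gamma}U(\lambda r)$, because of Serrin-criticality up to the log factor. Precisely, $v_\lambda(t)=v(t+\log\frac1\lambda)$ is a time translate, and translation preserves $v(t)t^{\beta}\to A$. Hence we may take $T$ replaced by $0$ by translating, obtaining $v>0$ on $[0,\infty)$. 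Finally, $U'<0$ wherever $w>0$, and $U>0$ there, so regularity theory for the ODE gives $U\in C^\infty$ on $(0,1)$ since $U'\neq0$. Then $u(x)=U(|x|)$ is a smooth positive solution in $B_1\setminus\{0\}$, and $|x|^{\gamma}(\log\frac1{|x|})^{\beta}u=v(t)t^\beta\to A_{n,p}$, which is \eqref{eq:final-limit}.
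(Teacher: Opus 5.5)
\textbf{Verdict: genuine gap.} Your heuristic derivation of $A_{n,p}$ is correct, and so is your Step 3. Since $\gamma=\frac{p}{q-p+1}$, the map $U\mapsto\lambda^{\gamma}U(\lambda r)$ is an exact symmetry; it acts on $v$ as a translation in $t=\log\frac1r$, and translation preserves $t^{\beta}v(t)\to A_{n,p}$. (Minor slip: $r^{n-1}|U'|^{p-2}U'$ is decreasing in $r$, not increasing, since its derivative is $-r^{n-1}U^q$.)

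\textbf{Where it fails: Step 2.} This step is the entire existence problem, and the contraction you assert fails for the map you describe. Take $h\equiv c$, i.e.\ $v=A(1+c)t^{-\beta}$. Then $w(t)=\int_t^\infty v^q\,ds=\frac{p}{n-p}A^q(1+c)^q t^{-\frac{n-p}{p}}$. Using $\frac{p}{n-p}A^{q}=(\gamma A)^{p-1}$, solving $\dot v+\gamma v=w^{1/(p-1)}$ returns $A(1+c)^{\frac{n}{n-p}}t^{-\beta}(1+O(1/t))$, plus an exponentially decaying term. So the linearization is
\[
 h\longmapsto \frac{n}{n-p}\,\frac{\int_t^\infty s^{-n/p}h(s)\,ds}{\int_t^\infty s^{-n/p}\,ds}+O(1/t).
\]
Its norm on $\sup|h|$ is $\frac{n}{n-p}>1$, so amplitude errors are amplified, not contracted. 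On $t^{-\mu}$ it has eigenvalue $\frac{n}{n-p+p\mu}$, which equals $1$ exactly at $\mu=1$. It sends $\frac{\log t}{t}$ to $\frac{\log t}{t}+\frac{p}{nt}$, so your $\log$-weighted space also gives norm at least $1$.

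This is structural, not a matter of choosing weights. With $v(T)$ prescribed and $w(\infty)=0$, varying $w(T)$ still gives a one-parameter family of solutions with the same asymptotics; the $1/t$ mode is the time translation of the slow variable. So the fixed-point problem is not uniquely solvable near the profile, and no map is a contraction on a set containing two of these solutions. (Also, $\int_{-\infty}^t$ needs $w$ on $(-\infty,T)$; on $[T,\infty)$ you must prescribe $v(T)$.)

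\textbf{The fallback.} It points at the right mechanism but is not carried out, and ``stable manifold'' is misleading. The reduced equation $\frac{dz}{ds}=\frac{n-p}{p}z-\gamma^{-q}z^{n/(n-p)}$ has a sink with eigenvalue $-1$, and the fast variable also contracts. So toward $r\to0$ the profile attracts a whole neighborhood. What is needed is forward integration from a finite time inside a trapping region, not selection at $t=\infty$. However, in $s=\log t$ the fast variable relaxes at rate $\gamma e^{s}$, so the system is not asymptotically autonomous in the usual sense, and this needs a real argument.

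\textbf{What the paper does instead.} Its $w$ and $P$ play the roles of your $v$ and $w$.
\begin{enumerate}
\item It sets $Z=w/P^{1/(p-1)}$, which gives $P'=-Z^qP^{\frac{n}{n-p}}$ and $Z'=1-Z+\frac1{p-1}Z^{q+1}P^{\frac{p}{n-p}}$.
\item The box $\{0<P\le\varepsilon,\ 1\le Z\le2\}$ is forward invariant, because $Z'>0$ on $Z=1$, $Z'<0$ on $Z=2$, and $P'<0$.
\item The identity $\frac{d}{dt}P^{-p/(n-p)}=\frac{p}{n-p}Z^q$, which lies between $\frac{p}{n-p}$ and $\frac{p2^q}{n-p}$, gives global existence and $P\to0$.
\item Variation of constants gives $Z\to1$, hence $P^{-p/(n-p)}\sim\frac{p}{n-p}t$, which yields $A_{n,p}$.
\end{enumerate}
Because the paper starts at $r=1$ and integrates toward $r=0$, it needs no backward continuation or rescaling at all.
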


The proof of Theorem~\ref{thm:main} uses a different mechanism from the semilinear argument of Aviles \cite{Aviles1983,Aviles1987}, which combines spherical averaging with energy estimates on the cylinder. Since spherical averaging does not commute with the $p$-Laplacian for $p\ne2$, this reduction is not directly available.
Instead, we first prove that the ratio of the spherical maximum to the spherical minimum tends to $1$ as \(|x| \to 0\). Then we establish an asymptotic upper bound for the minimum and an asymptotic lower bound for the maximum, with the same constant. The convergence of their ratio then forces both normalized quantities to converge to this constant, yielding the exact limit in \eqref{eq:final-limit}.

In contrast to the $C^2$ framework of Ma--Zang
\cite{MaZang2025}, we develop a new argument based on bounded variation (BV), which extends
the spherical-extrema method to continuous weak solutions.
We derive one-dimensional differential inequalities in the sense
of Radon measures and use the BV chain rule to account for the
singular parts of the distributional derivatives. This allows us to work directly with the spherical minimum and
maximum, which are locally Lipschitz but need not be twice
differentiable, and to establish the exact logarithmic asymptotics
throughout $1<p<n$.

The paper is organized as follows. Section~\ref{sec:preliminaries} collects the local regularity results and preliminary estimates.
Section~\ref{sec:sharp-asymptotics} proves Theorem~\ref{thm:main}. Section~\ref{sec:radial-existence} proves Theorem~\ref{prop:radial-existence} by constructing a positive radial solution with the exact logarithmic asymptotics.

\section{Preliminaries}
\label{sec:preliminaries}

We first record that $u$ is locally $C^{1,\alpha}$, strictly positive, and a viscosity solution away from the origin.

\begin{lemma}\label{lem:local-regularity}
Every weak solution in Theorem~\ref{thm:main} satisfies
\(
 u\in C^{1,\alpha}_{\mathrm{loc}}(B_1\setminus\{0\})
\)
for some $\alpha\in(0,1)$. Moreover, $u>0$ in
$B_1\setminus\{0\}$ and is a
viscosity solution of \eqref{eq:main}.
\end{lemma}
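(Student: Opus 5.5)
The plan has three parts: local boundedness, then $C^{1,\alpha}$ regularity, then positivity and the viscosity property. All of them work on a fixed compact annulus $D\Subset B_1\setminus\{0\}$. The equation \eqref{eq:main} can be written as $-\Delta_p u = V u^{p-1}$ with $V:=u^{q-p+1}$. The only a priori information is $u\in W^{1,p}_{\mathrm{loc}}$, so the first task is to show $u\in L^\infty_{\mathrm{loc}}(B_1\setminus\{0\})$.

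\emph{Step 1: local boundedness.} By Sobolev embedding, $u\in L^{p^*}_{\mathrm{loc}}$ with $p^*=np/(n-p)$. Hence
\[
 V\in L^{p^*/(q-p+1)}_{\mathrm{loc}}, \qquad \frac{p^*}{q-p+1}=\frac{np/(n-p)}{p(p-1)/(n-p)}=\frac{n}{p-1}.
\]
This is exactly the borderline integrability for Serrin's local estimates, since Serrin's theorem needs $V\in L^{n/(p-\varepsilon)}$ for some $\varepsilon>0$. This step is the main obstacle. I would run a Moser or De Giorgi iteration that uses smallness of the potential rather than a better exponent. Test with $\varphi=\eta^p u\min(u,k)^{\beta}$, which is admissible by the density remark. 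The critical term is
\[
\int V\,\eta^p u^{p}\min(u,k)^\beta .
\]
By absolute continuity of the integral, $\|V\|_{L^{n/p}(B_\rho(x_0))}$ is small on sufficiently small balls $B_\rho(x_0)\Subset B_1\setminus\{0\}$; here $n/p<n/(p-1)$, so this norm is finite. On such a ball, Hölder and Sobolev bound the critical term by $\epsilon\,\|\eta\,u\min(u,k)^{\beta/p}\|_{W^{1,p}}^p$, and this can be absorbed into the left-hand side. The first iteration therefore gives $u\in L^{r}_{\mathrm{loc}}$ for some $r>p^*$. Then $V\in L^{n/(p-1)+\delta}_{\mathrm{loc}}$, which is strictly better than the borderline, and Serrin's theorem \cite{Serrin1964} yields $u\in L^\infty_{\mathrm{loc}}$. (The Brezis--Kato type bootstrap is a known substitute, and Bidaut-V\'eron \cite{BidautVeron1989} contains estimates of this kind that could be quoted.)

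\emph{Step 2: regularity and positivity.} Once $u$ is locally bounded, the right-hand side $u^q$ is in $L^\infty_{\mathrm{loc}}$. The $C^{1,\alpha}$ regularity theory of DiBenedetto and Tolksdorf then gives $u\in C^{1,\alpha}_{\mathrm{loc}}(B_1\setminus\{0\})$. For positivity, $u\ge0$ and $-\Delta_p u=u^q\ge0$ weakly, so $u$ is a nonnegative weak supersolution of $-\Delta_p u\ge 0$. The weak Harnack inequality of Trudinger, or the strong maximum principle of V\'azquez, then shows that either $u\equiv0$ on the connected set $B_1\setminus\{0\}$ or $u>0$ everywhere there. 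Connectedness holds because $n\ge2$. Since $u$ is nontrivial, $u>0$.

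\emph{Step 3: viscosity property.} I would invoke the equivalence of weak and viscosity solutions of Juutinen--Lindqvist--Manfredi type for $-\Delta_p u=f(x)$ with continuous $f$. Here $f:=u^q\in C(B_1\setminus\{0\})$ is frozen as a given continuous function. Alternatively, one can argue by comparison: if a test function $\phi$ touches $u$ from below at $x_0$ and satisfies $-\Delta_p\phi<f$ near $x_0$ (with the usual convention at critical points where $\nabla\phi(x_0)=0$), then perturb $\phi$ by a small constant. Applying the weak comparison principle on a small ball gives a contradiction with the touching.
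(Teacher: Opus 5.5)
Your proof is correct and follows the same route as the paper. You get local boundedness from Serrin's theorem, $C^{1,\alpha}$ from DiBenedetto--Tolksdorf, and positivity from V\'azquez's strong maximum principle. The viscosity property comes from weak--viscosity equivalence for $-\Delta_p u=f$ with the frozen continuous right-hand side $f=u^q$; for $f\neq0$ the precise results are those of Julin--Juutinen or Medina--Ochoa, since Juutinen--Lindqvist--Manfredi treat $f=0$.

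The one inaccuracy is in Step~1, where you call $V\in L^{n/(p-1)}_{\mathrm{loc}}$ borderline. Serrin's condition $V\in L^{n/(p-\varepsilon)}$ for some $\varepsilon\in(0,1]$ already holds with $\varepsilon=1$. The true threshold is $n/p$, strictly below $n/(p-1)$, and it is reached only at the Sobolev-critical exponent $q=p^*-1$. So Serrin's theorem applies at once; this is what the paper's ``subcriticality assumption'' means. Your Brezis--Kato iteration is valid but not needed.
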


\begin{proof}
By the Sobolev embedding, the subcriticality assumption, and Serrin's local boundedness theorem \cite{Serrin1964}, we obtain
$u\in L^\infty_{\mathrm{loc}}(B_1\setminus\{0\})$. Standard interior regularity estimates \cite{DiBenedetto1983,Tolksdorf1984} then yield
$u\in C^{1,\alpha}_{\mathrm{loc}}(B_1\setminus\{0\})$ for some $\alpha\in(0,1)$.
Set \(g(x):=u(x)^q\). Then \(g\) is continuous and \(u\) is a weak solution of
\(-\Delta_p u=g(x)\). By \cite[Theorem~1.8]{MedinaOchoa2019}, \(u\) is also a viscosity solution of \(-\Delta_pu=u^q\).
Finally, the strong maximum principle \cite[Theorem~5]{Vazquez1984}
gives $u>0$, because $u\ge0$ and $u\not\equiv0$.
\end{proof}

The following proposition gives the integrability and the uniform bound of $u$ near the origin.

\begin{proposition}
\label{prop:potential-input}
Let $u$ be as in Theorem~\ref{thm:main}.  Then
\begin{equation}\label{eq:uq-integrable}
 u^q\in L^1(B_{1/2}),
\end{equation}
and there is $C>0$ such that
\begin{equation}\label{eq:fundamental-upper}
 u(x)\le C|x|^{-\frac{n-p}{p-1}},
 \qquad 0<|x|<\frac14.
\end{equation}
\end{proposition}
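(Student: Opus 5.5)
We will prove the two assertions of Proposition~\ref{prop:potential-input} in order. The integrability \eqref{eq:uq-integrable} comes from testing the equation with cutoffs that vanish near the origin. The pointwise bound \eqref{eq:fundamental-upper} then follows from a scaling argument combined with Harnack's inequality and a potential estimate.

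\emph{Step 1: $u^q\in L^1(B_{1/2})$.} Fix $\eta\in C_c^\infty(B_1)$ with $\eta\equiv1$ on $B_{1/2}$. For small $\varepsilon$, let $\psi_\varepsilon$ be a cutoff vanishing on $B_\varepsilon$, equal to $1$ outside $B_{2\varepsilon}$, with $|\nabla\psi_\varepsilon|\le C/\varepsilon$. We test the equation with $\varphi=(\eta\psi_\varepsilon)^{k}$ for a large exponent $k$. This is legitimate by density, since $\varphi$ has compact support in $B_1\setminus\{0\}$. Hölder and Young give
\[
\int u^q\varphi \le \int |\nabla u|^{p-1}|\nabla\varphi|.
\]
On the annulus $A_\varepsilon=B_{2\varepsilon}\setminus B_\varepsilon$ we do not yet have gradient bounds. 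So we first use the weak Harnack/Caccioppoli structure: test with $u^{-\gamma}\varphi$ for small $\gamma\in(0,p-1)$. Since $u>0$ by Lemma~\ref{lem:local-regularity}, this is allowed. It yields
\[
\int|\nabla u|^pu^{-\gamma-1}\varphi\lesssim \int u^{p-1-\gamma}|\nabla\varphi|^p\varphi^{1-p}.
\]
Combining this with Hölder reduces the problem to bounding
\[
\varepsilon^{-p}\int_{A_\varepsilon}u^{p-1}\lesssim \varepsilon^{n-p}\,\varepsilon^{-n}\int_{A_\varepsilon}u^{p-1}.
\]
Using Hölder with exponent $q/(p-1)=n/(n-p)$, this is at most
\[
C\left(\int_{A_\varepsilon} u^q\right)^{\frac{p-1}{q}}\varepsilon^{(n-p)\cdot 0}.
\]
The power of $\varepsilon$ cancels exactly, precisely because $q$ is the Serrin exponent. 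Hence
\[
\int u^q\varphi\le C+C\Bigl(\int u^q\varphi\Bigr)^{\theta}
\quad\text{for some } \theta<1.
\]
Here we absorb the annular integral into $\int u^q\varphi$, choosing $k$ large so that $\varphi^{\,\cdot}$ powers match. This gives a bound uniform in $\varepsilon$, and monotone convergence as $\varepsilon\to0$ gives the claim. This is the classical Brezis--Lions/Bidaut-Véron argument, and we could alternatively cite \cite{BidautVeron1989} directly.

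\emph{Step 2: the pointwise bound.} For $0<r<1/4$ set
\[
u_r(y)=r^{\frac{n-p}{p-1}}u(ry), \qquad \tfrac12<|y|<2.
\]
At the Serrin exponent the rescaling produces
\[
-\Delta_pu_r=r^{\frac{n-p}{p-1}(p-1)+p-\frac{n-p}{p-1}q}\,u_r^q=r^{\,p-\frac{n-p}{p-1}\cdot\frac{n(p-1)}{n-p}+(n-p)}\,u_r^q=u_r^q,
\]
so the equation is invariant. Write
\[
u_r^{q}=r^{n}\,u(ry)^{q}\,r^{-n}\cdot r^{\frac{n-p}{p-1}q}\,r^{-n}\ldots
\]
A direct computation shows that $\int_{1/2<|y|<2}u_r^q\,dy=\int_{r/2<|x|<2r}u^q\,dx$. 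By Step 1 this tends to $0$ as $r\to0$.

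So $u_r$ is a positive solution of $-\Delta_pu_r=V_ru_r^{p-1}$ with $V_r=u_r^{q-p+1}$. Here
\[
\|V_r\|_{L^{n/p}}^{n/p}=\int u_r^q\to0,
\]
since $(q-p+1)n/p=q$. This is exactly the critical Serrin class. However, Harnack in Serrin's theorem needs $V$ in $L^{n/p+\delta}$, and smallness in $L^{n/p}$ alone does not immediately give local boundedness.

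This is the main obstacle. We would first try a Moser iteration exploiting smallness of $\|V_r\|_{L^{n/p}}$. Test with $u_r^{\beta}\eta^p$ and use Sobolev:
\[
\int V_r u_r^{p-1+\beta}\eta^p\le \|V_r\|_{n/p}\,\|u_r^{(p-1+\beta)/p}\eta\|_{p^*}^p.
\]
This term is absorbed into the left side when $\|V_r\|_{n/p}$ is small relative to the constant depending on $\beta$. For the finitely many steps needed to reach some $L^{s}$ with $s>q\cdot\frac{n}{p}\cdot\frac1{q}$, this gives $V_r\in L^{n/p+\delta}$ with a uniform bound. Serrin's Harnack inequality \cite{Serrin1964} then gives
\[
\sup_{|y|=1}u_r\le C\inf_{|y|=1}u_r,
\]
uniformly for small $r$.

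It remains to bound $\inf_{|y|=1}u_r$ uniformly. Let $m(r)=\min_{|x|=r}u$. If $m(r)\,r^{\frac{n-p}{p-1}}$ were unbounded along a sequence, Harnack would give $u\ge c\,m(r)$ on $\partial B_r$. The comparison principle for $p$-superharmonic functions, since $-\Delta_pu\ge0$, applied against multiples of the fundamental solution on annuli would then force $\int_{B_{2r}\setminus B_{r}}u^q$ to be large, contradicting Step 1. Alternatively, the Kilpeläinen--Malý estimate \cite{KilpelainenMaly1994} bounds $u(x)$ by the Wolff potential of the measure $\mu=-\Delta_pu$. Since $\mu$ extends as a finite measure on $B_{1/2}$ (plus a possible Dirac mass at $0$), this measure has finite total mass, and this directly yields $u\le C|x|^{-\frac{n-p}{p-1}}$ up to the Harnack correction on spheres.
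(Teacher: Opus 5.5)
Your argument is correct in substance, but it follows a different route from the paper. The paper proves this proposition purely by citation: $u^q\in L^1(B_{1/2})$ is read off from Bidaut-V\'eron's isolated-point extension theorem ($-\Delta_pu=g+\lambda\delta_0$ on the full ball, with $g=u^q$ a.e.), and the bound $u\le C|x|^{-\frac{n-p}{p-1}}$ is quoted from Serrin--Zou (or Ma--Zang). Your Step~1 is a direct test-function proof of the first fact, with the same citation as fallback. Your Step~2 derives the pointwise bound from the integrability together with standard Moser/Harnack theory.

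The key observation in Step~2 is that at the Serrin exponent the potential $V_r=u_r^{q-p+1}$ satisfies $\|V_r\|_{L^{n/p}}^{n/p}=\int_{r/2<|x|<2r}u^q\to0$, a scale-invariant quantity. This smallness lets you absorb the potential term in one Moser step (testing with $u_r^{q-p+1}\eta^p$). That puts $u_r$ in $L^{qn/(n-p)}$, hence $V_r$ in $L^{\frac np\cdot\frac n{n-p}}$, uniformly in $r$. Serrin's Harnack and local boundedness estimates then hold with constants independent of $r$.

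This buys more than the proposition. The estimate $\sup_{3/4\le|y|\le3/2}u_r\le C\|u_r\|_{L^q(1/2<|y|<2)}\to0$ gives $|x|^{\frac{n-p}{p-1}}u(x)\to0$ directly, which is Lemma~\ref{lem:zero-mass-uniform}. It also reverses the paper's logic, where the pointwise bound is needed first to obtain a uniform Harnack constant. The paper's route is much shorter, but it rests entirely on those external theorems.

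Three points should be tidied.

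First, in Step~1 the Caccioppoli estimate for $u^{-\gamma}\varphi$ must be localized to scale $\varepsilon$. For example, test with $u^{-\gamma}(\psi_\varepsilon\chi)^k$, where $\chi\equiv1$ on $B_{2\varepsilon}$ and $\operatorname{supp}\chi\subset B_{4\varepsilon}$. Then every term lives at scale $\varepsilon$, and the resulting $u^q$-integrals are still dominated by $\int u^q\varphi$. With the global $\varphi=(\eta\psi_\varepsilon)^k$, the fixed contribution of $\nabla\eta$ gets multiplied by $\bigl(\varepsilon^{-p}\int_{A_\varepsilon}u^{(1+\gamma)(p-1)}\varphi^{1-p/k}\bigr)^{1/p}$. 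H\"older only bounds this factor by $\varepsilon^{-(n-p)\gamma/p}$ times a power of $\int u^q\varphi$, which is not uniform in $\varepsilon$. The powers of $\varepsilon$ cancel only in the product of the two scale-$\varepsilon$ factors.

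Second, in your last paragraph no comparison with the fundamental solution is needed. Harnack on the annulus gives $\int_{r\le|x|\le2r}u^q\ge c\,(r^{\frac{n-p}{p-1}}m(r))^q$ at once.

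Third, the Kilpel\"ainen--Mal\'y alternative does not work as stated. Finite total mass of $\mu$ does not control the Wolff potential of $u^q\,dx$ near $x$ unless $u$ is already bounded there.
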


\begin{proof}
The continuous weak supersolution $u$ is $p$-superharmonic.  The
isolated-point extension theorem \cite[Theorem~1.1]{BidautVeron1989} gives a nonnegative Radon measure on a
full ball of the form
\[
 -\Delta_pu=g+\lambda\delta_0,
 \qquad g\in L^1_{\mathrm{loc}},\quad \lambda\ge0.
\]
In the punctured ball, the weak equation identifies $g=u^q$ almost
everywhere, which proves \eqref{eq:uq-integrable}.
The estimate \eqref{eq:fundamental-upper} follows from \cite[Theorem IV (b)]{SerrinZou2002}. We may also refer to \cite[Lemmas~3.1--3.3]{MaZang2025}.
\end{proof}

The next lemma shows that $u$ is uniformly of smaller order than the fundamental solution near the origin.

\begin{lemma}[Uniformly weaker than the fundamental solution]
\label{lem:zero-mass-uniform}
One has
\begin{equation}\label{eq:uniform-zero-mass}
 \lim_{x\to0}|x|^{\frac{n-p}{p-1}}u(x)=0.
\end{equation}
\end{lemma}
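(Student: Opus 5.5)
We first show that the Dirac mass produced in Proposition~\ref{prop:potential-input} vanishes, and then convert $\lambda=0$ into the uniform estimate \eqref{eq:uniform-zero-mass} by a scaling and compactness argument. By \cite[Theorem~1.1]{BidautVeron1989}, $-\Delta_pu=u^q+\lambda\delta_0$ in $\mathcal D'(B_{1/2})$ with $\lambda\ge0$.

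\emph{Step 1: $\lambda=0$.} Suppose $\lambda>0$. Then $u$ is a $p$-superharmonic function whose Riesz measure is at least $\lambda\delta_0$. Comparison with the fundamental solution, for instance via the Kilpel\"ainen--Mal\'y Wolff potential lower bound \cite{KilpelainenMaly1994}, gives
\[
 u(x)\ge c\,\lambda^{\frac1{p-1}}|x|^{-\frac{n-p}{p-1}}
\]
near $0$. For $q=\frac{n(p-1)}{n-p}$ we then have
\[
 u^q\ge c|x|^{-n},
\]
which is not integrable at the origin. This contradicts \eqref{eq:uq-integrable}, so $\lambda=0$.

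\emph{Step 2: scaling.} Suppose \eqref{eq:uniform-zero-mass} fails. Then there are $x_k\to0$ and $\varepsilon>0$ with $|x_k|^{\frac{n-p}{p-1}}u(x_k)\ge\varepsilon$. Set $r_k=|x_k|$ and
\[
 v_k(y)=r_k^{\frac{n-p}{p-1}}u(r_ky).
\]
A direct computation gives
\[
 -\Delta_pv_k=r_k^{\frac{n-p}{p-1}(p-1)+p-\frac{n-p}{p-1}q}\,v_k^q .
\]
Since $\frac{n-p}{p-1}q=n$, the exponent equals $n-p+p-n=0$, so each $v_k$ is again a solution.

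By \eqref{eq:fundamental-upper}, $v_k(y)\le C|y|^{-\frac{n-p}{p-1}}$, so the $v_k$ are locally bounded on $\R^n\setminus\{0\}$. The $C^{1,\alpha}$ estimates \cite{DiBenedetto1983,Tolksdorf1984} then give a subsequence converging in $C^1_{\mathrm{loc}}(\R^n\setminus\{0\})$ to a solution $v$ of $-\Delta_pv=v^q$. The limit satisfies $v\le C|y|^{-\frac{n-p}{p-1}}$ and, after passing to a further subsequence of $x_k/r_k$, $v(y_0)\ge\varepsilon$ at some $|y_0|=1$.

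\emph{Step 3: the limit carries no mass.} Fix an annulus $\{a<|y|<b\}$. By scale invariance,
\[
 \int_{a<|y|<b}v_k^q\,dy=\int_{ar_k<|x|<br_k}u^q\,dx\to0
\]
by \eqref{eq:uq-integrable}. Hence $v^q\equiv0$, so $v$ is $p$-harmonic in $\R^n\setminus\{0\}$. On the other hand, the flux of $u$ through small spheres $|x|=r$ tends to $\lambda=0$, because $\lambda=0$ and $\int_{B_r}u^q\to0$. This flux is also scale invariant, so
\[
 \int_{|y|=1}|\nabla v|^{p-2}\partial_\nu v\,d\sigma=0 .
\]

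\emph{Step 4: conclusion.} The function $v$ is nonnegative and $p$-harmonic in $\R^n\setminus\{0\}$, and $v\le C|y|^{-\frac{n-p}{p-1}}$ gives $v\to0$ at infinity. By Kichenassamy--V\'eron \cite{KichenassamyVeron1986}, such a $v$ equals $c\mu(|y|)+\text{const}$, where $\mu$ is the fundamental solution. The decay at infinity forces the constant to be $0$, and the zero flux forces $c=0$. Hence $v\equiv0$, which contradicts $v(y_0)\ge\varepsilon$ and proves \eqref{eq:uniform-zero-mass}.

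\emph{Main obstacle.} The delicate point is Step 3, namely passing the flux to the limit. I would use the distributional identity against a radial cutoff $\psi$ equal to $1$ on $B_{r}$:
\[
 \int|\nabla u|^{p-2}\nabla u\cdot\nabla\psi\,dx=\int u^q\psi\,dx+\lambda .
\]
With $\lambda=0$, the right-hand side tends to $0$ as the support of $\psi$ shrinks. The left-hand side is scale invariant and converges to the corresponding quantity for $v$ by $C^1_{\mathrm{loc}}$ convergence. This yields the zero-flux statement for $v$ without any pointwise flux computation.
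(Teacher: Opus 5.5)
Your argument is correct, but most of it is unnecessary, because Step 3 already finishes the proof. Once you have
\[
 \int_{a<|y|<b}v^q\,dy=\lim_{k}\int_{ar_k<|x|<br_k}u^q\,dx=0
\]
and $v\ge0$ is continuous, you get $v\equiv0$ on every annulus. That directly contradicts $v(y_0)\ge\varepsilon$. There is no need to pass through ``$v$ is $p$-harmonic''. So Step 1 ($\lambda=0$ via Kilpel\"ainen--Mal\'y), the flux limit you flagged as the main obstacle, and the Kichenassamy--V\'eron classification in Step 4 can all be deleted. They are fine, just unused.

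What remains is exactly the mechanism of the paper's proof. It has two ingredients: uniform local control on scales comparable to $|x_k|$, coming from \eqref{eq:fundamental-upper}, and the scale-invariant quantity $\int u^q$ over such regions, which must become small by \eqref{eq:uq-integrable}.

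The paper avoids blow-up and compactness. It writes the equation as $-\Delta_pu=b\,u^{p-1}$ with $b=u^{q-p+1}$, and notes that $|x_0|^p\|b\|_{L^\infty(B(x_0,|x_0|/4))}\le M$ by \eqref{eq:fundamental-upper}. Serrin's Harnack inequality then spreads the pointwise bound $|x_j|^{\frac{n-p}{p-1}}u(x_j)\ge\varepsilon$ to the whole ball $B(x_j,|x_j|/8)$. Choosing these balls pairwise disjoint, each carries $\int u^q\ge c_0\varepsilon^q$, and summing contradicts integrability.

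Your $C^{1,\alpha}$-equicontinuity step plays the role of the Harnack inequality. The Harnack route is slightly more elementary, with no limit equation and no subsequences. Yours makes explicit that $v_k(y)=r_k^{\frac{n-p}{p-1}}u(r_ky)$ solves exactly the same equation.
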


\begin{proof}
Write
\[
 b(x):=u(x)^{q-p+1}.
\]
Then \eqref{eq:main} is
\[
 -\Delta_pu=b(x)u^{p-1}.
\]
By \eqref{eq:fundamental-upper},
\[
|x_0|^p
\|b\|_{L^\infty(B(x_0,|x_0|/4))}
\le M
\]
uniformly for sufficiently small $|x_0|$. Harnack inequality \cite[Theorem~7]{Serrin1964}, applied after
rescaling $B(x_0,|x_0|/4)$ to the unit ball, yields
\begin{equation}\label{eq:local-harnack}
 \inf_{B(x_0,|x_0|/8)}u\ge c\,u(x_0),
\end{equation}
where $c>0$ is independent of $x_0$.

If \eqref{eq:uniform-zero-mass} failed, there would be $\varepsilon>0$
and $x_j\to0$ such that
\[
 |x_j|^{\frac{n-p}{p-1}}u(x_j)\ge\varepsilon.
\]
Passing to a subsequence, assume $|x_{j+1}|\le|x_j|/4$, so that the balls
$B_j:=B(x_j,|x_j|/8)$ are pairwise disjoint.  By \eqref{eq:local-harnack},
\[
 u(x)\ge c\varepsilon |x_j|^{-\frac{n-p}{p-1}}
 \qquad\text{for }x\in B_j.
\]
Since $\frac{n-p}{p-1}q=n$,
\[
 \int_{B_j}u^q\,dx
 \ge c_0\varepsilon^q|x_j|^{n-\frac{n-p}{p-1}q}
 =c_0\varepsilon^q.
\]
Summing over the disjoint balls contradicts
$u^q\in L^1(B_{1/2})$.  Hence \eqref{eq:uniform-zero-mass} holds.
\end{proof}


The following lemma shows that either the singularity at $0$
is removable or $u(x)\to+\infty$ as $x\to0$.

\begin{lemma}\label{lem:dichotomy}
Either $0$ is removable, or
\begin{equation}\label{eq:divergence}
 u(x) \to +\infty
 \qquad\text{as }x\to0.
\end{equation}
\end{lemma}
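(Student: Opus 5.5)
The plan is to use the measure representation from the proof of Proposition~\ref{prop:potential-input}: $u$ extends to a $p$-superharmonic function on $B_{1/2}$ with
\[
 -\Delta_pu=u^q+\lambda\delta_0 \quad \text{in } \mathcal{D}'(B_{1/2}),
 \qquad u^q\in L^1(B_{1/2}),\quad \lambda\ge0.
\]
The first step is to show $\lambda=0$. If $\lambda>0$, the comparison estimates of Kichenassamy--V\'eron/Kilpel\"ainen--Mal\'y give, via the Wolff potential lower bound $u(x)\ge c\,W_{1,p}^{\lambda\delta_0}(x)-C$, the estimate
\[
 u(x)\ge c\lambda^{\frac1{p-1}}|x|^{-\frac{n-p}{p-1}}
\]
near $0$. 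This contradicts Lemma~\ref{lem:zero-mass-uniform}. Alternatively, it also contradicts $u^q\in L^1$, since $|x|^{-n}$ is not integrable. Hence
\[
 -\Delta_pu=u^q \quad \text{in } \mathcal{D}'(B_{1/2}).
\]

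Next, split according to whether $u$ is bounded near $0$, i.e.\ whether $\liminf_{x\to0}u(x)<\infty$. Since $u$ is $p$-superharmonic and lower semicontinuous after extension by $u(0)=\operatorname{ess\,liminf}_{x\to0}u$, the value $u(0)$ equals $\liminf_{x\to0}u(x)$. If $u(0)=+\infty$, then $u(x)\to+\infty$ and \eqref{eq:divergence} holds.

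Otherwise $u(0)<\infty$, and we must show the singularity is removable. Proving that $u\in L^\infty$ near $0$ is the main obstacle. I would use the Kilpel\"ainen--Mal\'y pointwise estimate at the origin:
\[
 u(0)\ge c\,W^{\mu}_{1,p}(0,r)-C, \qquad \mu=u^q\,dx.
\]
This shows
\[
 \int_0^{r}\Bigl(\frac{\mu(B_t)}{t^{n-p}}\Bigr)^{\frac1{p-1}}\frac{dt}{t}<\infty .
\]
This finiteness, combined with the scaled Harnack inequality \eqref{eq:local-harnack} on annuli, should give a bound on $\max_{|x|=t}u$. More directly, I would run the same argument at points $x_0$ near $0$ using the upper Wolff bound
\[
 u(x_0)\le C\inf_{B(x_0,|x_0|/8)}u + C W^\mu_{1,p}(x_0,2|x_0|).
\]
Iterating Harnack along a dyadic chain of annuli, the infimum over $\{|x|=t\}$ is controlled by the minimum of $u$ near points where $u$ is close to $u(0)$, which is finite.

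If the Wolff control is too weak, a fallback is Serrin's removability theorem \cite{Serrin1964}. That theorem says a bounded solution, or more generally one with $u\in L^{s}$ for $s$ large, has a removable isolated singularity. I would reach the needed integrability by a Moser iteration with the potential $b=u^{q-p+1}\in L^{n/p}(B_{1/2})$; here $b^{n/p}=u^q$, so $b\in L^{n/p}$ follows exactly from \eqref{eq:uq-integrable}. Since the Moser iteration requires smallness of $\|b\|_{L^{n/p}(B_r)}$, which holds for small $r$ by absolute continuity of the integral, this route gives $u\in L^s_{\mathrm{loc}}$ for all $s$ and then $u\in L^\infty$. This requires test functions cut off near $0$, justified because $\lambda=0$.

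Once $u$ is bounded, $u^q\in L^\infty$ and $u$ is a weak solution in $B_{1/2}$. The regularity results \cite{DiBenedetto1983,Tolksdorf1984} then give $C^{1,\alpha}$, which is alternative (i). Finally, I would check that the two alternatives are exclusive: a removable $u$ is continuous at $0$, so it cannot satisfy \eqref{eq:divergence}.
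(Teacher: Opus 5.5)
There is a genuine gap. The reduction to the case $\liminf_{x\to0}u<\infty$ is fine, and so is $\lambda=0$. But $\lambda=0$ holds for every solution here, including the nonremovable ones; as your own argument shows, it follows from Lemma~\ref{lem:zero-mass-uniform}. So everything rests on the step you call the main obstacle: $\liminf_{x\to0}u<\infty$ forces $u\in L^\infty$ near $0$. Neither of your routes proves this.

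Your Moser fallback cannot work, because it never uses $\liminf u<\infty$. The radial solution of Theorem~\ref{prop:radial-existence} also has $\lambda=0$, $u^q\in L^1$ and $\|u^{q-p+1}\|_{L^{n/p}(B_r)}\to0$, yet it is unbounded. The iteration breaks exactly where you write ``justified because $\lambda=0$''. Testing with $\eta^pu^{\beta+1}$ across the origin requires $u$ to lie in the energy space there, and $\lambda=0$ does not give this. For the singular solution, $\nabla u\notin L^p(B_r)$, and the cutoff errors $\int u^{\beta+p}|\nabla\chi_\varepsilon|^p$ blow up as $\varepsilon\to0$.

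The Wolff route is not carried out either. Your upper estimate at scale $|x_0|$, with the infimum over $B(x_0,|x_0|/8)$, says no more than \eqref{eq:local-harnack}. Chaining Harnack through dyadic annuli from a radius where $u$ is controlled multiplies constants $C^k$ with $k$ unbounded, so it gives no uniform bound. Finiteness of $W^{u^q}_{1,p}(0,r)$ alone does not bound $\max_{|x|=t}u$ either.

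A working version of this route would need two ingredients. First, the Kilpel\"ainen--Mal\'y upper bound at a fixed scale $r_0$, so that $\inf_{B(x_0,r_0)}u\le u(0)$. Second, a uniform bound on $W^{u^q}_{1,p}(x_0,2r_0)$, obtained by splitting it:
\begin{itemize}
\item the part $t<|x_0|/2$ is $o(u(x_0))$ by Harnack and Lemma~\ref{lem:zero-mass-uniform}, so it can be absorbed;
\item the rest is dominated by $W^{u^q}_{1,p}(0,\cdot)$.
\end{itemize}

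The paper needs none of this. If $u$ were unbounded with $\liminf u<\infty$, then \eqref{eq:annular-harnack} yields radii $r_j^-<s_j<r_j^+\to0$ with $\min_{|x|=r_j^\pm}u\to\infty$ and $\max_{|x|=s_j}u\le C$. The minimum principle for the supersolution $u$ on $B_{r_j^+}\setminus\overline B_{r_j^-}$ then gives a contradiction. This comparison argument is the idea missing from your proposal.

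Once boundedness is known, your final step is acceptable: bounded $p$-superharmonic functions lie in $W^{1,p}_{\mathrm{loc}}$, and $\lambda=0$. The paper instead verifies the equation across $0$ directly with the cutoffs $\chi_\varepsilon$.
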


\begin{proof}
Fix \(0<\rho<1\), and introduce the fixed annuli
\[
A_\rho:=\{y\in\mathbb R^n:\rho\le |y|\le \rho^{-1}\},
\qquad
\widetilde A_\rho
:=\left\{y\in\mathbb R^n:\frac{\rho}{2}\le |y|
\le 2\rho^{-1}\right\}.
\]
For \(r>0\) sufficiently small, define
\[
u_r(y):=u(ry),
\qquad y\in\widetilde A_\rho.
\]
By scaling the equation, \(u_r\) satisfies
\[
-\Delta_p u_r
=
r^p u_r^q
=
a_r(y)u_r^{p-1}
\quad\text{in }\widetilde A_\rho,
\]
where
\[
a_r(y):=r^p u(ry)^{q-p+1}=
|y|^{-p}
\left(|ry|^{\frac{n-p}{p-1}}u(ry)\right)^{q-p+1}.
\]
By \eqref{eq:uniform-zero-mass},
\[
a_r(y) \to 0
\qquad\text{as }r\to0
\]
uniformly for \(y\in\widetilde A_\rho\).
In particular, after decreasing \(r\) if necessary,
\[
\|a_r\|_{L^\infty(\widetilde A_\rho)}\le1.
\]
Harnack inequality applied after rescaling gives a constant
\(C_\rho>1\), independent of \(r\), such that
\[
\sup_{A_\rho}u_r
\le
C_\rho\inf_{A_\rho}u_r.
\]
This gives for every fixed $0<\rho<1$, a
constant $C_\rho$, independent of \(r\), such that
\begin{equation}\label{eq:annular-harnack}
 \sup_{\rho r\le |x|\le \rho^{-1}r}u(x)
 \le C_\rho
 \inf_{\rho r\le |x|\le \rho^{-1}r}u(x)
\end{equation}
for all sufficiently small $r$.

Suppose that $u$ does not tend to infinity.  If $u$ were nevertheless
unbounded near zero, \eqref{eq:annular-harnack} would allow us to choose
three radii
\[
 r_j^-<s_j<r_j^+\longrightarrow0
\]
such that
\[
 \min_{|x|=r_j^-}u(x)\to\infty,
 \qquad
 \max_{|x|=s_j}u(x)\le C,
 \qquad
 \min_{|x|=r_j^+}u(x)\to\infty.
\]
Since $u$ is $p$-superharmonic, the maximum principle with the constant
\[
 L_j:=\min\left\{
 \min_{|x|=r_j^-}u,
 \min_{|x|=r_j^+}u
 \right\}
\]
in the annulus $B_{r_j^+}\setminus\overline{B}_{r_j^-}$ gives
$u\ge L_j$ throughout $B_{r_j^+}\setminus\overline{B}_{r_j^-}$, contradicting the bounded value on
$|x|=s_j$.  Thus $u$ is bounded near zero.


In this case, we show that
the singularity at \(0\) is removable. Choose \(R>0\) sufficiently small and let
\[
M:=\|u\|_{L^\infty(B_{2R})}.
\]
Choose \(\chi\in C^\infty([0,\infty))\), \(0\le\chi\le1\), and
\[
\chi(r)=0\quad\text{for }0\le r\le1,
\qquad
\chi(r)=1\quad\text{for }r\ge2.
\]
For \(\varepsilon>0\), define $\chi_\varepsilon(x):=
\chi\!\left(\frac{|x|}{\varepsilon}\right)$.
Then
\[
\begin{aligned}
\int_{\mathbb R^n}|\nabla\chi_\varepsilon|^p\,dx \le C\,\varepsilon^{\,n-p}\to 0
\qquad\text{as }\varepsilon\downarrow0.
\end{aligned}
\]
Let \(\eta\in C_c^\infty(B_{2R})\) satisfy
\(0\le\eta\le1\) and \(\eta\equiv1\) in \(B_R\), and set
\[
\psi_\varepsilon:=\eta\chi_\varepsilon.
\]
Since \(\psi_\varepsilon\) vanishes near \(0\), the function
\(\psi_\varepsilon^p u\) is an admissible test function in the punctured ball.
Testing \eqref{eq:main} with \(\psi_\varepsilon^p u\), we obtain
\begin{align*}
 \int_{B_{2R}}\psi_\varepsilon^p|\nabla u|^p\,dx
 =\int_{B_{2R}}u^{q+1}\psi_\varepsilon^p\,dx-p\int_{B_{2R}}u\psi_\varepsilon^{p-1}|\nabla u|^{p-2}
          \nabla u\cdot\nabla\psi_\varepsilon\,dx.
\end{align*}
The last term is estimated by
\begin{align*}
 p u\psi_\varepsilon^{p-1}|\nabla u|^{p-1}|\nabla\psi_\varepsilon|
 &\le\frac12\psi_\varepsilon^p|\nabla u|^p
    +C_pu^p|\nabla\psi_\varepsilon|^p.
\end{align*}
Thus
\[
\int_{B_{2R}}\psi_\varepsilon^p|\nabla u|^p\,dx
\le
C\int_{B_{2R}}u^p|\nabla\psi_\varepsilon|^p\,dx
+
C\int_{B_{2R}}u^{q+1}\psi_\varepsilon^p\,dx.
\]
Letting
\(\varepsilon\downarrow0\) yields
\[
\int_{B_R}|\nabla u|^p\,dx<\infty.
\]
Thus the bounded extension of \(u\) across the single point \(0\) belongs to
\(W^{1,p}(B_R)\).

It remains to verify the equation across \(0\). Let
\(\varphi\in C_c^\infty(B_R)\). Since
\(\varphi\chi_\varepsilon\) vanishes near \(0\), it is an admissible test
function, and hence
\[
\int \chi_\varepsilon|\nabla u|^{p-2}\nabla u\cdot\nabla\varphi\,dx
+\int \varphi|\nabla u|^{p-2}\nabla u\cdot\nabla\chi_\varepsilon\,dx
=
\int u^q\varphi\chi_\varepsilon\,dx.
\]
The term involving \(\nabla\chi_\varepsilon\) tends to zero, since
\[
\left|
\int
|\nabla u|^{p-2}\nabla u\cdot
\varphi\nabla\chi_\varepsilon\,dx
\right|
\le
\|\varphi\|_\infty
\|\nabla u\|_{L^p(B_R)}^{p-1}
\|\nabla\chi_\varepsilon\|_{L^p}
\to 0.
\]
The remaining terms converge by dominated convergence, and therefore
\[
\int_{B_R}|\nabla u|^{p-2}\nabla u\cdot\nabla\varphi\,dx
=
\int_{B_R}u^q\varphi\,dx.
\]
Hence, \(u\) extends as a weak solution of \eqref{eq:main} across \(0\). Since \(u^q\in L^\infty(B_R)\), the standard local regularity theory yields
\[
u\in C^{1,\alpha}_{\mathrm{loc}}(B_R)
\]
for some \(\alpha\in(0,1)\). Thus, the singularity is removable.
\end{proof}

\section{Proof of Theorem~\ref{thm:main}}
\label{sec:sharp-asymptotics}

In this section, we prove Theorem~\ref{thm:main}.
By Lemma~\ref{lem:dichotomy}, it remains to consider the case
of a nonremovable singularity. Throughout this section, we therefore
assume that $0$ is nonremovable, so that
\eqref{eq:uniform-zero-mass} and \eqref{eq:divergence} hold. For $0<r<1$, inspired by Ma--Zang \cite{MaZang2025} we define
\begin{equation*}
 \underline u(r):=\min_{|x|=r}u(x)
 \quad \text{and} \quad
 \overline u(r):=\max_{|x|=r}u(x).
\end{equation*}

We first prove that the spherical minimum and maximum are asymptotically equivalent near the nonremovable singularity.

\begin{lemma}[Asymptotic spherical symmetry]
\label{lem:asymptotic-symmetry}
In the nonremovable case,
\begin{equation}\label{eq:ratio-one}
 \frac{\overline u(r)}{\underline u(r)} \to 1
 \qquad\text{as }r\to0.
\end{equation}
\end{lemma}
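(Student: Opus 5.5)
We argue by rescaling and compactness, following the scheme already set up in the proof of Lemma~\ref{lem:dichotomy}. Fix $0<\rho<1$ and, for small $r$, normalize
\[
 v_r(y):=\frac{u(ry)}{\underline u(r)},\qquad y\in\widetilde A_\rho .
\]
Since the equation is homogeneous of degree $p-1$ in $u$, $v_r$ solves
\[
 -\Delta_p v_r=a_r(y)\,v_r^{p-1}\quad\text{in }\widetilde A_\rho ,
\]
with $a_r(y)=r^pu(ry)^{q-p+1}$ as in that proof. By \eqref{eq:uniform-zero-mass}, $a_r\to0$ uniformly on $\widetilde A_\rho$. By \eqref{eq:annular-harnack}, $1\le v_r\le C_\rho$ on $A_\rho$ uniformly in $r$, and $\min_{|y|=1}v_r=1$.

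\emph{Step 1 (compactness).} The uniform $L^\infty$ bound and the uniform bound on the right-hand side give, through the interior $C^{1,\alpha}$ estimates of DiBenedetto--Tolksdorf, bounds in $C^{1,\alpha}$ on compact subsets of the interior of $A_\rho$ that do not depend on $r$. A diagonal argument over $\rho\to0$ then produces, for any sequence $r_j\to0$, a subsequence with $v_{r_j}\to v$ in $C^1_{\rm loc}(\R^n\setminus\{0\})$. The limit $v$ is a positive $p$-harmonic function in $\R^n\setminus\{0\}$ with $\min_{|y|=1}v=1$. The Harnack chain bounds \eqref{eq:annular-harnack} alone do not survive the limit uniformly in $\rho$, so global control must come from a different source.

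\emph{Step 2 (Liouville/Kichenassamy--V\'eron).} Positive $p$-harmonic functions in $\R^n\setminus\{0\}$ are classified by \cite{KichenassamyVeron1986} (Bôcher-type theorem at both $0$ and $\infty$): they have the form $v=\alpha|y|^{-\frac{n-p}{p-1}}+\beta$ only in the cases $\alpha=0$ or $\beta=0$, and in general are radial. Concretely, a positive $p$-harmonic function near $0$ is either $\sim c\,\mu(y)$, with $\mu$ the fundamental solution, or it is removable; the same holds at $\infty$ after Kelvin-type considerations. Either way $v$ is radial, hence constant on $|y|=1$. This gives $\overline u(r_j)/\underline u(r_j)=\max_{|y|=1}v_{r_j}\to\max_{|y|=1}v=1$. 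Since the sequence was arbitrary, \eqref{eq:ratio-one} follows.

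\emph{Main obstacle.} The delicate point is justifying that $v$ is radial, i.e.\ the entire classification of positive $p$-harmonic functions in the punctured space, together with the need for uniform-in-$r$ control of $v_r$ on arbitrarily large annuli. If quoting a full Liouville result is too heavy, I would instead exploit the structure already available. First, $\underline u$ and $\overline u$ are nonincreasing near $0$: by the maximum principle argument of Lemma~\ref{lem:dichotomy}, $\underline u$ has no interior local minimum, and $u\to\infty$. This monotonicity yields uniform bounds for $v_r$ on the inner part $|y|\le1$, namely $v_r\ge1$ there. On the outer part, $v_r\le1$ after comparison with $\overline u$, up to the ratio. The limit $v$ is then bounded below by $1$ near $0$ and bounded above at $\infty$, which reduces matters to the isolated-singularity theorem of \cite{KichenassamyVeron1986} at $0$, together with a Liouville theorem for bounded-at-infinity $p$-harmonic functions. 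Both alternatives there are radial.

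A complementary route I would try in parallel is a moving-plane/reflection comparison à la Ma--Zang \cite{MaZang2025}, run directly on $v$: compare $v$ with its reflection across hyperplanes through $0$ using the comparison principle for $p$-harmonic functions. This gives symmetry of $v$ in every direction, hence radiality, without needing the explicit form of $v$.
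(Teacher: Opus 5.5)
Your Steps 1--2 are essentially the paper's proof: normalize by $\underline u(r_j)$, use Harnack plus interior $C^{1,\alpha}$ compactness, and apply the Kichenassamy--V\'eron classification to see that the positive $p$-harmonic limit on $\R^n\setminus\{0\}$ is radial, hence $\equiv1$ on $\partial B_1$. The argument is correct once you make the following repairs: the classification says every positive $p$-harmonic function on $\R^n\setminus\{0\}$ is $A|y|^{-\frac{n-p}{p-1}}+B$ with $A,B\ge0$, and every such sum is $p$-harmonic because $\Delta_p$ only involves $\nabla v$, so your clause ``only in the cases $\alpha=0$ or $\beta=0$'' is false; asymptotics at $0$ and $\infty$ yield radiality only after a comparison argument on annuli, not via a Kelvin transform, which does not preserve $p$-harmonicity for $p\neq2$; the Harnack lower bound on $A_\rho$ is $C_\rho^{-1}$, not $1$; and neither uniform-in-$\rho$ control nor a fallback route is needed (the fallback also relies on an unproved monotonicity of $\overline u$), since positivity of the limit on each compact annulus is all the classification uses.
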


\begin{proof}
Let $r_j\downarrow0$ be arbitrary and put
\[
 m_j:=\underline u(r_j),
 \qquad
 U_j(y):=\frac{u(r_jy)}{m_j}.
\]
By \eqref{eq:annular-harnack}, on every compact annulus
$K\Subset\R^n\setminus\{0\}$ there is $C_K>1$ such that
\begin{equation}\label{eq:29}
 C_K^{-1}\le U_j\le C_K
\end{equation}
for all large $j$.
Indeed, choose \(0<\rho<1\) such that
\[
K\cup \partial B_1\subset\{y:\rho\le |y|\le \rho^{-1}\}.
\]
\eqref{eq:annular-harnack} gives
\[  \sup_{\rho r_j\le |x|\le \rho^{-1}r_j}u(x)\le C_\rho \inf_{\rho r_j\le |x|\le \rho^{-1}r_j}u(x).  \]
 Since
\[
\inf_{\rho r_j\le |x|\le \rho^{-1}r_j}u(x)\le m_j\le \sup_{\rho r_j\le |x|\le \rho^{-1}r_j}u(x),
\]
we obtain
\[
C_\rho^{-1}m_j\le u(r_jy)\le C_\rho m_j
\qquad (y\in K).
\]
Dividing by \(m_j\) yields \eqref{eq:29}.
Moreover,
\[
 -\Delta_pU_j=\lambda_jU_j^q,
 \qquad
 \lambda_j:=r_j^pm_j^{q-p+1}
 =\bigl(r_j^{\frac{n-p}{p-1}}m_j\bigr)^{q-p+1}\to 0
\]
by \eqref{eq:uniform-zero-mass}.
For every compact annulus
\(K\Subset\mathbb R^n\setminus\{0\}\), choose
\(K\Subset K'\Subset\mathbb R^n\setminus\{0\}\).
The preceding Harnack estimate gives
\[
\|U_j\|_{L^\infty(K')}\le C_{K'},\qquad
\|\lambda_jU_j^q\|_{L^\infty(K')}
\le \lambda_j C_{K'}^q\le C
\]
for all large \(j\). Hence, the standard local regularity theory for the
nonhomogeneous \(p\)-Laplace equation yields
\[
\|U_j\|_{C^{1,\alpha}(K)}\le C_K,
\]
with \(C_K\) independent of \(j\).
After passing to a subsequence,
\[
 U_j\to  U
 \qquad\text{in }C^1_{\mathrm{loc}}(\R^n\setminus\{0\}),
\]
where $U>0$ is $p$-harmonic in $\R^n\setminus\{0\}$.
Every positive $p$-harmonic function in the punctured space has the form
\begin{equation*}
 U(y)=A|y|^{-\frac{n-p}{p-1}}+B,
 \qquad A,B\ge0;
\end{equation*}
see \cite[Theorem~2.2]{KichenassamyVeron1986} or
\cite[Theorem~5.3]{MaZang2025}.  Since
\[
 \min_{|y|=1}U_j(y)=1,
\]
we have $A+B=1$.  In particular, $U$ is identically equal to $1$ on
$\partial B_1$, and therefore
\[
 \frac{\overline u(r_j)}{\underline u(r_j)}
 =\max_{|y|=1}U_j(y) \to 1.
\]
The original sequence $r_j$ was arbitrary, so \eqref{eq:ratio-one}
holds.
\end{proof}


The spherical minimum and maximum are, respectively, a weak supersolution and a weak subsolution of \eqref{eq:main}.

\begin{lemma}\label{lem:envelope-inequalities}
The radial functions $\underline u(|x|)$ and
$\overline u(|x|)$ belong to
$W^{1,\infty}_{\mathrm{loc}}(B_1\setminus\{0\})$ and satisfy
\begin{align}
 -\Delta_p\underline u&\ge \underline u^q,
 \label{eq:lower-super}\\
 -\Delta_p\overline u&\le \overline u^q
 \label{eq:upper-sub}
\end{align}
weakly in $B_1\setminus\{0\}$.
\end{lemma}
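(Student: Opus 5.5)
The plan is to prove both inequalities through the rotation invariance of the equation, and to let the viscosity framework of Lemma~\ref{lem:local-regularity} do the real work.

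For every rotation $R\in O(n)$, the function $u_R(x):=u(Rx)$ is again a positive $C^{1,\alpha}$ viscosity (and weak) solution of \eqref{eq:main} in $B_1\setminus\{0\}$. We can therefore write
\[
 \underline u(|x|)=\inf_{R\in O(n)}u_R(x),
 \qquad
 \overline u(|x|)=\sup_{R\in O(n)}u_R(x).
\]

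\emph{Lipschitz regularity.} On any compact annulus $\{a\le|x|\le b\}$ the family $\{u_R\}$ is uniformly bounded in $C^1$, since $\|\nabla u_R\|_\infty=\|\nabla u\|_{L^\infty}$ there. An infimum or supremum of an equi-Lipschitz family is Lipschitz with the same constant. This gives $\underline u(|x|),\overline u(|x|)\in W^{1,\infty}_{\mathrm{loc}}(B_1\setminus\{0\})$, and the one-variable functions $\underline u,\overline u$ are locally Lipschitz on $(0,1)$. Both are also continuous and positive.

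\emph{Viscosity inequalities.} The key structural facts are the following:
\begin{itemize}[label=--]
\item the infimum of a locally uniformly bounded family of continuous viscosity supersolutions is a viscosity supersolution, provided it is continuous (the standard Perron-type lemma);
\item symmetrically, the supremum of subsolutions is a subsolution.
\end{itemize}
Here the right-hand side $u^q$ depends on $u$ itself, but it is continuous and nondecreasing in $u$, so the comparison argument still works.

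For the minimum, take a test function $\varphi$ touching $\underline u(|\cdot|)$ from below at $x_0$. By compactness of $O(n)$ the infimum is attained at some $R_0$. Then $\varphi$ also touches $u_{R_0}$ from below at $x_0$, because $u_{R_0}\ge\underline u(|\cdot|)\ge\varphi$ with equality at $x_0$. The supersolution property of $u_{R_0}$ gives the required inequality at $x_0$ with right-hand side $u_{R_0}(x_0)^q=\underline u(|x_0|)^q$. This yields \eqref{eq:lower-super} in the viscosity sense, and \eqref{eq:upper-sub} follows identically for the maximum. Attainment makes the argument direct, with no approximation needed.

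One technical point must be handled. For $p<2$ the operator is singular where $\nabla\varphi(x_0)=0$. We use the Juutinen--Lindqvist--Manfredi convention, under which test functions with vanishing gradient are excluded or treated as in the paper's reference \cite{MedinaOchoa2019}. The touching argument is insensitive to this convention, since the same $\varphi$ is tested against $u_{R_0}$.

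\emph{From viscosity to weak.} We convert back using \cite[Theorem~1.8]{MedinaOchoa2019} or the Juutinen--Lindqvist--Manfredi equivalence. A continuous viscosity supersolution of $-\Delta_pv=f$ with continuous $f$ is a weak supersolution, and this applies with $f=\underline u(|x|)^q$ frozen as a continuous function. This is the step I expect to be the main obstacle. The equivalence results are usually stated for the equation with $f$ given, or for $p$-superharmonic functions, so one must check that the version with a continuous right-hand side and only Lipschitz regularity is available.

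If it is not, I would fall back to the radial ODE. Since $\underline u(|x|)$ is radial and Lipschitz, the weak inequality \eqref{eq:lower-super} is equivalent to the distributional inequality
\[
 -\bigl(r^{n-1}|\underline u'|^{p-2}\underline u'\bigr)'\ge r^{n-1}\underline u^q
\]
on $(0,1)$. This can be verified directly from the viscosity inequality by a one-dimensional argument. The viscosity inequality shows that $r^{n-1}|\underline u'|^{p-2}\underline u'$ admits no interior local behavior violating monotonicity relative to $\int r^{n-1}\underline u^q$. Concretely, we compare $\underline u$ on subintervals with radial solutions of the ODE having the same boundary values, and conclude via the radial comparison principle. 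The same fallback handles \eqref{eq:upper-sub} for $\overline u$.
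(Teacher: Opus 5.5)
Your proposal is correct and follows the paper's proof essentially step for step: rotation invariance with $\underline u(|x|)=\min_{Q}u_Q(x)$ and $\overline u(|x|)=\max_Q u_Q(x)$, equi-Lipschitz bounds on compact annuli, the touching argument at a rotation attaining the extremum, and then a viscosity-to-weak conversion on annuli with the right-hand side frozen as the continuous function $\underline u(|x|)^q$ (resp.\ $\overline u(|x|)^q$). The step you flagged is closed in the paper by citing the nonhomogeneous equivalence of Julin--Juutinen \cite[Theorems~3.2 and 4.1]{JulinJuutinen2012} (Medina--Ochoa is used there only for the weak-to-viscosity direction), so your radial-ODE fallback is not needed; also, your aside that comparison ``still works'' because $u^q$ is nondecreasing has the monotonicity backwards for properness, though your main argument never relies on it.
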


\begin{proof}
For any orthogonal transformation $Q\in O(n)$, set $u_Q(x):=u(Qx)$.  Rotational invariance gives
$-\Delta_pu_Q=u_Q^q$, and
\[
 \underline u(|x|)=\min_{Q\in O(n)}u_Q(x),
 \qquad
 \overline u(|x|)=\max_{Q\in O(n)}u_Q(x).
\]
Let
\[
K=\{x\in\mathbb R^n:a\le |x|\le b\}
\Subset B_1\setminus\{0\}
\]
be a compact annulus. For all \(x,y\in K\),
\[
|u_Q(x)-u_Q(y)|
\le L_K|x-y|,
\]
uniformly with respect to \(Q\). Therefore,
\[
\begin{aligned}
|\underline u(|x|)-\underline u(|y|)|&\le L_K|x-y|,\\
|\overline u(|x|)-\overline u(|y|)|&\le L_K|x-y|,
\qquad x,y\in K,
\end{aligned}
\]
and thus both radial functions belong to
\(W^{1,\infty}_{\mathrm{loc}}(B_1\setminus\{0\})\).

By Lemma~\ref{lem:local-regularity}, each \(u_Q\) is a viscosity solution of
\(-\Delta_p u_Q=u_Q^q\).
Fix \(x_0\in B_1\setminus\{0\}\), and choose an orthogonal
transformation \(Q_0\) such that
\[
\underline u(|x_0|)=u_{Q_0}(x_0).
\]
Let \(\phi\) be a \(C^2\) function touching \(\underline u\) from below at
\(x_0\).
Since
\[
\phi(x)\le\underline u(|x|)\le u_{Q_0}(x)
\quad\text{near }x_0,
\qquad
\phi(x_0)=\underline u(|x_0|)=u_{Q_0}(x_0),
\]
the same function \(\phi\) touches \(u_{Q_0}\) from below at \(x_0\).
The viscosity supersolution property gives
\[
-\Delta_p\phi(x_0)
\ge u_{Q_0}(x_0)^q
=\underline u(|x_0|)^q.
\]
Thus, \eqref{eq:lower-super} holds in the viscosity sense. The analogous
upper-contact argument gives \eqref{eq:upper-sub} in the viscosity sense.
%
%
Applying the nonhomogeneous weak--viscosity equivalence \cite[Theorem~3.2 and Theorem~4.1]{JulinJuutinen2012}, locally on compact annuli, gives \eqref{eq:lower-super} and \eqref{eq:upper-sub}.
\end{proof}


Put
\begin{equation}\label{eq:EF}
 t:=\frac{n-p}{p-1}\log\frac{n-p}{(p-1)r},
 \qquad
 r=\frac{n-p}{p-1}e^{-\frac{p-1}{n-p}t},
\end{equation}
and define
\begin{equation}\label{eq:wpm}
 e^t \underline w(t):=\underline u(r),
 \qquad
 e^t \overline w(t):=\overline u(r).
\end{equation}
Then as $t\to\infty$,
\begin{equation*}
\begin{aligned}
 \underline w(t),\ \overline w(t)&\to 0,\\
 e^t\underline w(t),\ e^t\overline w(t)&\to \infty.
\end{aligned}
\end{equation*}
Define almost everywhere
\begin{equation}\label{eq:flux-definitions}
\begin{aligned}
 P_-(t)&:=|\underline w(t)+\underline w'(t)|^{p-2}
             \bigl(\underline w(t)+\underline w'(t)\bigr),\\
 P_+(t)&:=|\overline w(t)+\overline w'(t)|^{p-2}
             \bigl(\overline w(t)+\overline w'(t)\bigr).
\end{aligned}
\end{equation}
The quantities \(P_\pm\) are locally of bounded variation (BV) and satisfy
the following one-dimen\-sional measure inequalities.

\begin{lemma}
\label{lem:one-dimensional-flux}
$P_\pm(t)$ have locally BV representatives and satisfy
\begin{align}
 -DP_-&\ge \underline w^q\,dt,
 \label{eq:flux-lower}\\
 -DP_+&\le \overline w^q\,dt
 \label{eq:flux-upper}
\end{align}
as Radon-measure inequalities on every sufficiently far right half-line. Here \(DP_\pm\) denotes the distributional derivative of the BV function \(P_\pm\), viewed as a signed Radon measure.
In particular, $P_-$ has a nonincreasing representative.
\end{lemma}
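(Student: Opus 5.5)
The proof goes by testing the weak inequalities of Lemma~\ref{lem:envelope-inequalities} against nonnegative radial test functions. Under the change of variables \eqref{eq:EF}--\eqref{eq:wpm} these turn into one-dimensional distributional inequalities for $P_\pm$. Positivity of distributions then gives the Radon-measure and BV structure.

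\emph{Step 1: the radial computation.} For a radial Lipschitz function $v(|x|)$ and a radial test function $\varphi(x)=\phi(|x|)$, $\phi\in C_c^\infty((a,b))$ with $0<a<b<1$, polar coordinates give
\[
\int|\nabla v|^{p-2}\nabla v\cdot\nabla\varphi\,dx=|S^{n-1}|\int_a^b r^{n-1}|v'|^{p-2}v'\phi'\,dr .
\]
Let $\beta:=\frac{p-1}{n-p}$, so that $r=\beta^{-1}e^{-\beta t}$ and $\frac{dt}{dr}=-\frac{1}{\beta r}$. With $v(r)=e^t w(t)$ one gets $v'(r)=-\frac{e^t}{\beta r}(w+w')$ almost everywhere. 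Hence
\[
r^{n-1}|v'|^{p-2}v'=-\beta^{1-p}\,r^{n-p}e^{(p-1)t}P=-\beta^{1-p}\beta^{p-n}P=-\beta^{1-n}P .
\]
Here $r^{n-p}=\beta^{p-n}e^{-(p-1)t}$, so the exponential factors cancel exactly. This cancellation is why the normalization in \eqref{eq:EF} was chosen.

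Write $\psi(t):=\phi(r)$ and change variables, $dr=-\beta r\,dt$, $\phi'(r)=-\frac{1}{\beta r}\psi'(t)$. The left side becomes $-|S^{n-1}|\beta^{1-n}\int P\,\psi'\,dt$. On the right, $v^q=e^{qt}w^q$ and $r^{n-1}dr$ corresponds to $\beta^{-n}e^{-n\beta t}\,dt$. Since $n\beta=q$, the right side is $|S^{n-1}|\beta^{-n}\int w^q\psi\,dt$.

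So, after multiplying by a positive constant, the weak inequality \eqref{eq:lower-super} reads
\[
-\int P_-\psi'\,dt\ \ge\ \kappa\int\underline w^q\psi\,dt\qquad\text{for all }0\le\psi\in C_c^\infty(T,\infty),
\]
with $\kappa$ equal to $\beta^{-1}$ or $1$ depending on bookkeeping. The inequality \eqref{eq:upper-sub} gives the reverse inequality for $P_+$ and $\overline w$.

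I will recheck the constant carefully. If $\kappa\neq1$, the stated inequality still follows whenever $\kappa\ge1$ for \eqref{eq:flux-lower}. Otherwise one rescales $w$, and I expect the paper's normalization to make $\kappa=1$. This constant check is the step I consider most error-prone.

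\emph{Step 2: measure structure.} By Lemma~\ref{lem:envelope-inequalities}, $\underline w$ and $\overline w$ are locally Lipschitz in $t$, so $P_\pm\in L^\infty_{\mathrm{loc}}$. The distribution $\mu_-:=-DP_--\underline w^q\,dt$ is nonnegative, hence a nonnegative Radon measure. Therefore $DP_-=-\underline w^q\,dt-\mu_-$ is a signed Radon measure, so $P_-\in BV_{\mathrm{loc}}$. Moreover $DP_-\le0$, so $P_-$ has a nonincreasing (for instance right-continuous) representative.

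For $P_+$, the distribution $DP_++\overline w^q\,dt$ is nonnegative. Hence $P_+(t)+\int_T^t\overline w^q\,ds$ has a nondecreasing representative, which makes $P_+$ locally BV and gives \eqref{eq:flux-upper}.

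\emph{Main obstacle.} The delicate point is justifying the use of radial test functions. This is legitimate because the weak inequalities of Lemma~\ref{lem:envelope-inequalities} hold for all nonnegative $C_c^\infty(B_1\setminus\{0\})$ test functions, and radial ones are among them. One must also check that the chain rule for $v=e^tw$ is valid for Lipschitz $w$. It is, since the composition with the smooth diffeomorphism $r\leftrightarrow t$ preserves Lipschitz regularity and almost-everywhere derivatives. The phrase ``sufficiently far right half-line'' only reflects that $t$ large corresponds to $r$ small, inside $B_1\setminus\{0\}$.
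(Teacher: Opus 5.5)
Your strategy is the paper's: test \eqref{eq:lower-super} and \eqref{eq:upper-sub} against radial functions $\psi(t(|x|))$, pass to the variable $t$, and get the Radon-measure and BV structure from positivity of distributions. Your flux identity $r^{n-1}|v'|^{p-2}v'=-\beta^{1-n}P$ is correct, and so is Step~2 taken on its own.

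The gap is in the rest of Step~1, which is where the lemma's content lies: the sign and the constant $1$. Your computation contains two errors, and you then leave $\kappa$ undetermined.

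\emph{Orientation.} The map $r\mapsto t$ reverses orientation, so after $dr=-\beta r\,dt$ the limits must be flipped. You did this implicitly on the right-hand side but kept the minus sign on the left.

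\emph{Jacobian.} You have $r^{n-1}|dr|=\beta r^{n}\,dt=\beta^{1-n}e^{-n\beta t}\,dt$, not $\beta^{-n}e^{-n\beta t}\,dt$.

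With both corrected,
\[
|\mathbb S^{n-1}|\int_a^b r^{n-1}|v'|^{p-2}v'\phi'\,dr=|\mathbb S^{n-1}|\,\beta^{1-n}\int P\,\psi'\,dt,
\qquad
|\mathbb S^{n-1}|\int_a^b r^{n-1}v^q\phi\,dr=|\mathbb S^{n-1}|\,\beta^{1-n}\int w^q\psi\,dt .
\]
Hence $\kappa=1$ exactly, and \eqref{eq:lower-super} becomes $\int P_-\psi'\,dt\ge\int\underline w^q\psi\,dt$, i.e.\ $-DP_-\ge\underline w^q\,dt$. This is the paper's computation.

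As you wrote it, $-\int P_-\psi'\,dt\ge\kappa\int\underline w^q\psi\,dt$ means $DP_-\ge\kappa\,\underline w^q\,dt\ge0$, i.e.\ $P_-$ is nondecreasing. That is the opposite of what you assert in Step~2, so Step~2 does not follow from your Step~1 as written.

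Your fallback does not repair this. The functions $w$ and $P_\pm$ are fixed by \eqref{eq:wpm} and \eqref{eq:flux-definitions}, so rescaling $w$ proves a different statement. Moreover, \eqref{eq:flux-lower} needs $\kappa\ge1$ while \eqref{eq:flux-upper} needs $\kappa\le1$, so the argument yields both only if $\kappa=1$. With your value $\kappa=\beta^{-1}=\frac{n-p}{p-1}$, one of them would fail to follow unless $n=2p-1$.

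The exact value matters downstream: the sharp constants in Lemmas~\ref{lem:sharp-upper} and~\ref{lem:sharp-lower} come from it. Note also that the exponential factors cancel for any prefactor $c$ in $r=c\,e^{-\beta t}$. The general constant is $\kappa=(\beta c)^p$, and it is the choice $c=\frac{n-p}{p-1}$ in \eqref{eq:EF} that makes $\kappa=1$. By contrast, the point you flag as the main obstacle, admissibility of radial test functions, is immediate.
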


%

\begin{proof}
We have
\begin{equation}\label{eq:exact-flux-transform}
  r^{n-1}|\underline u'|^{p-2}\underline u'
=
-(\frac{n-p}{p-1})^{\,n-1}|\underline w+\underline w'|^{p-2}(\underline w+\underline w')
=
-(\frac{n-p}{p-1})^{\,n-1}P_-(t).
\end{equation}
Fix a nonnegative function
\(
\zeta\in C_c^\infty((T,\infty)),
\)
where \(T\) is large enough so that \(0<r(t)<1\) for \(t>T\), and define the radial test function
\(
\phi(x):=\zeta(t(|x|)).
\)
By \eqref{eq:lower-super},
\begin{equation}\label{eq:radial-test-lower}
|\mathbb S^{n-1}|
\int_0^1
r^{n-1}|\underline u'|^{p-2}\underline u'\phi_r\,dr
\ge
|\mathbb S^{n-1}|
\int_0^1
r^{n-1}\underline u^q\phi\,dr.
\end{equation}
We first transform the left-hand side. By \eqref{eq:exact-flux-transform} and
\[
\phi_r=-\frac{n-p}{(p-1)r}\zeta'(t),
\]
we obtain
\[
r^{n}|\underline u'|^{p-2}\underline u'\phi_r
=
\left(\frac{n-p}{p-1}\right)^n P_-(t)\zeta'(t).
\]
It follows that
\begin{equation}
\int_0^1
r^{n-1}|\underline u'|^{p-2}\underline u'\phi_r\,dr
=
\left(\frac{n-p}{p-1}\right)^{n-1}
\int_{\frac{n-p}{p-1}\log\frac{n-p}{p-1}}^{\infty} P_-(t)\zeta'(t)\,dt.
\label{eq:radial-test-left-transform}
\end{equation}
Next we transform the right-hand side. We have
\[
r^n \underline u(r)^q
=\left(\frac{n-p}{p-1}\right)^n \underline w(t)^q.
\]
Consequently,
\begin{align}\label{eq:radial-test-right-transform}
\int_0^1 r^{n-1}\underline u(r)^q\phi(r)\,dr
=
\left(\frac{n-p}{p-1}\right)^{n-1}
\int_{\frac{n-p}{p-1}\log\frac{n-p}{p-1}}^{\infty} \underline w(t)^q\zeta(t)\,dt.
\end{align}
Substituting \eqref{eq:radial-test-left-transform} and \eqref{eq:radial-test-right-transform} into \eqref{eq:radial-test-lower},
we obtain
\[
\int P_-(t)\zeta'(t)\,dt
\ge
\int \underline w(t)^q\zeta(t)\,dt
\]
for every nonnegative
\(\zeta\in C_c^\infty((T,\infty))\).
Thus
\[
-DP_-\ge \underline w^q\,dt
\]
in the sense of distributions.
The same calculation applied to \eqref{eq:upper-sub} gives
\[
-DP_+\le \overline w^q\,dt
\]
in the sense of distributions.
\(-DP_- -\underline w^q\,dt\) and $DP_+ +\overline w^q\,dt$ are positive distributions, hence positive Radon measures. Since \(\underline w^q\,dt\) and \(\overline w^q\,dt\) are locally finite Radon measures, \(DP_\pm\) are
signed Radon measures. Therefore
\(
P_\pm\in BV_{\rm loc}((T,\infty)).
\)
Moreover,
\[
-DP_-\ge \underline w^q\,dt\ge0,
\]
so \(DP_-\le0\), and hence \(P_-\) has a nonincreasing representative.
\end{proof}


%
%

We next obtain a sharp asymptotic upper bound for the rescaled spherical minimum \(\underline w\).

\begin{lemma}[Sharp upper estimate]\label{lem:sharp-upper}
One has
\begin{equation}\label{eq:lower-envelope-limsup}
 \limsup_{t\to\infty}t^{\frac{1}{q-p+1}}\underline w(t)\le \left(\frac{n-p}{p}\right)^{\frac{n-p}{p(p-1)}}.
\end{equation}
\end{lemma}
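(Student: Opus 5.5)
The plan is to turn the measure inequality \eqref{eq:flux-lower} into a scalar differential inequality for the monotone flux $P_-$ alone, solve that inequality explicitly, and then pass back to $\underline w$ through the first-order relation $\underline w+\underline w'=P_-^{1/(p-1)}$. Throughout, $P_-$ denotes the nonincreasing representative from Lemma~\ref{lem:one-dimensional-flux}. Since $\underline w$ is locally Lipschitz, $\underline w+\underline w'=|P_-|^{\frac{1}{p-1}-1}P_-$ a.e., and Duhamel's formula applies to $\underline w$.

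\emph{Step 1 (sign and limit of the flux).} First, $P_-\ge0$ on a far half-line. Otherwise monotonicity gives $P_-\le-\delta<0$ for all $t\ge t_0$. Then $\underline w+\underline w'\le-\delta^{1/(p-1)}$, so
\[
(e^t\underline w)'=e^t(\underline w+\underline w')<0 .
\]
This contradicts $e^t\underline w(t)\to\infty$. Hence $P_-$ decreases to a limit $L\ge0$.

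Next, $L=0$. If $L>0$, then $\underline w'\ge L^{1/(p-1)}-\underline w\ge\frac12L^{1/(p-1)}$ for large $t$, because $\underline w\to0$. This would force $\underline w\to\infty$, a contradiction. Consequently $\underline w+\underline w'=P_-^{1/(p-1)}\ge0$ a.e.

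\emph{Step 2 (lower bound of $\underline w$ by $P_-$, and decay of $P_-$).} Fix $T$ large. Since $P_-$ is nonincreasing,
\[
\underline w(t)=e^{-(t-T)}\underline w(T)+\int_T^t e^{s-t}P_-(s)^{\frac1{p-1}}\,ds\ \ge\ \bigl(1-e^{-(t-T)}\bigr)P_-(t)^{\frac1{p-1}}.
\]
Hence for every $\varepsilon>0$ we have $\underline w\ge(1-\varepsilon)P_-^{1/(p-1)}$ for $t\ge T_\varepsilon$. Inserting this into \eqref{eq:flux-lower} gives
\[
-DP_-\ge(1-\varepsilon)^qP_-^{\frac q{p-1}}\,dt
\]
as measures. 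I then apply the BV chain rule to the nonincreasing function $G:=P_-^{-\frac{q-p+1}{p-1}}$, which is finite where $P_->0$. On the absolutely continuous part, the rule gives
\[
DG\ge\frac{q-p+1}{p-1}(1-\varepsilon)^q\,dt .
\]
The jump and Cantor parts of $DP_-$ are nonpositive, so they only increase $G$. Here I use that $s\mapsto s^{-\gamma}$ is decreasing: a downward jump of $P_-$ is an upward jump of $G$. If $P_-$ vanishes at some point, it vanishes afterwards, and then $\underline w$ decays exponentially, so the claim is trivial. Integrating, and using $\frac{q-p+1}{p-1}=\frac p{n-p}$, I obtain
\[
P_-(t)^{\frac1{p-1}}\le(1+o(1))\Bigl(\tfrac{p}{n-p}\,t\Bigr)^{-\frac1{q-p+1}} .
\]

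\emph{Step 3 (upper bound of $\underline w$).} Apply Duhamel's formula on $[t-\tau,t]$ and use monotonicity of $P_-$:
\[
\underline w(t)\le e^{-\tau}\underline w(t-\tau)+P_-(t-\tau)^{\frac1{p-1}} .
\]
Choose $\tau=2\log t$. Since $\underline w$ is bounded, the first term is $O(t^{-2})=o\bigl(t^{-1/(q-p+1)}\bigr)$; this uses $\frac1{q-p+1}=\frac{n-p}{p(p-1)}$, and I should check that $2\log t$ suffices, or else take $\tau=k\log t$ with $k$ large. Since $(t-2\log t)/t\to1$, Step 2 yields
\[
\limsup_{t\to\infty}t^{\frac1{q-p+1}}\underline w(t)\le\Bigl(\frac{n-p}{p}\Bigr)^{\frac{n-p}{p(p-1)}},
\]
which is \eqref{eq:lower-envelope-limsup}.

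The main obstacle is Step 2. The inequality is only a measure inequality, so the chain-rule computation for $G=P_-^{-\gamma}$ must be done carefully on the absolutely continuous, jump and Cantor parts, with signs checked in each. The sign control from Step 1, namely $P_-\ge0$ and $\underline w+\underline w'\ge0$, is what makes both Duhamel estimates legitimate.
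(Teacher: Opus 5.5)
Your proposal is correct and follows the paper's proof essentially step by step: the sign of the monotone flux from $e^t\underline w\to\infty$, the Duhamel lower bound $\underline w\ge(1-\varepsilon)P_-^{1/(p-1)}$, the BV chain rule for $P_-^{-p/(n-p)}$, and a final Duhamel upper bound over a window $\tau(t)\to\infty$ with $\tau=o(t)$. Two small corrections: $G=P_-^{-p/(n-p)}$ is nondecreasing, not nonincreasing; and $\tau=2\log t$ fails in general because it requires $\frac{n-p}{p(p-1)}<2$, so you must take $\tau=k\log t$ with $k>\frac{n-p}{p(p-1)}$ (the paper's choice $\tau=\sqrt t$ avoids this).
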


\begin{proof}
We first claim that
\[
 P_-(t)>0
\]
for all sufficiently large $t$.
Indeed, if $P_-(t_0)\le0$ at a sufficiently large point, then the monotonicity
from \eqref{eq:flux-lower} gives $P_-(t)\le0$ for $t\ge t_0$.  Since
the map $s\mapsto |s|^{p-2}s$ is strictly increasing and sign-preserving,
$\underline w'+\underline w\le0$ almost everywhere.  Hence
\[
 \frac{d}{dt}(e^t\underline w(t))\le0,
\]
contradicting $e^t\underline w(t)=\underline u(r)\to\infty$.  Consequently
\begin{equation}\label{eq:lower-linear}
 \underline w'(t)+\underline w(t)=P_-(t)^{1/(p-1)}>0
\end{equation}
for almost every sufficiently large $t$.

Fix $L>0$.  Solving \eqref{eq:lower-linear} on $[t-L,t]$ gives
\begin{equation}\label{eq:variation-lower}
 \underline w(t)=e^{-L}\underline w(t-L)
 +\int_{t-L}^t e^{-(t-s)}P_-(s)^{1/(p-1)}\,ds.
\end{equation}
Since $P_-$ is nonincreasing,
\begin{equation}\label{eq:w-lower-A}
 \underline w(t)\ge(1-e^{-L})P_-(t)^{1/(p-1)}.
\end{equation}
Combining \eqref{eq:flux-lower} with \eqref{eq:w-lower-A}, we obtain
\begin{equation}\label{eq:lower-flux-power}
 -DP_-\ge (1-e^{-L})^qP_-^{q/(p-1)}\,dt.
\end{equation}
Let
\(
 \delta:=\frac{q}{p-1}-1.
\)
The chain rule gives
\begin{equation}\label{eq:lower-chain}
 D(P_-^{-\delta})\ge\delta (1-e^{-L})^q\,dt.
\end{equation}
Indeed, fix a compact interval contained in the half-line where $P_->0$.
Write
\[
DP_-=D^aP_-+D^cP_-+D^jP_-,
\]
where $D^aP_-$, $D^cP_-$, and $D^jP_-$ denote respectively the absolutely continuous part, the Cantor part, and the jump part of $DP_-$.
Write
\[
D^aP_-=P'_{-}(t)\,dt.
\]
From \eqref{eq:lower-flux-power},
we have, for the absolutely continuous part,
\[
-P'_{-}(t)\ge (1-e^{-L})^q P_-^{\delta+1}(t)
\quad\text{a.e.}
\]
and by the BV chain rule,
\[
D^a(P_-^{-\delta})
=-\delta P_-^{-(\delta+1)}P'_{-}(t)\,dt
\ge \delta (1-e^{-L})^q\,dt.
\]
Moreover, since $DP_-\le 0$, we have $D^cP_-\le 0$. And at every jump point $t\in \{t:\; P_-(t+)\neq P_-(t-)\}$,
\[
P_-(t+)-P_-(t-)\le 0.
\]
Since $s\mapsto s^{-\delta}$ is decreasing,
\[
D^c(P_-^{-\delta})=-\delta\widetilde P_-^{-\delta-1}\,D^cP_-\ge0,
\]
where $\widetilde P_-$ denotes the precise representative of $P_-$; in one dimension one may take
\[
\widetilde P_-(t):=\frac{P_-(t+)+P_-(t-)}{2}.
\]
Also at every jump point $t_0$,
\[
P_-(t_0+)^{-\delta}-P_-(t_0-)^{-\delta}\ge0.
\]
Thus $D^j(P_-^{-\delta})$ is nonnegative.
Combining the three parts yields
\[
D(P_-^{-\delta})\ge \delta  (1-e^{-L})^q\,dt.
\]
Fix a sufficiently large continuity point $t_0$ of $P_-$.  Integrating
\eqref{eq:lower-chain} gives, for $t>t_0$,
\[
 P_-(t)^{-\delta}
 \ge P_-(t_0)^{-\delta}+\delta  (1-e^{-L})^q(t-t_0).
\]
Thus
\[
 t^{\frac{1}{q-p+1}}P_-(t)^{1/(p-1)}
 \le\left[
 \frac{P_-(t_0)^{-\delta}}{t}
 +\delta  (1-e^{-L})^q\left(1-\frac{t_0}{t}\right)
 \right]^{-\frac{1}{q-p+1}}.
\]
Letting $t\to\infty$ and  $L\to\infty$ yields
\begin{equation}\label{eq:Aminus-limsup}
 \limsup_{t\to\infty}t^{\frac{1}{q-p+1}}P_-(t)^{1/(p-1)}
 \le\left(\frac{p-1}{q-p+1}\right)^{\frac{1}{q-p+1}}
 =\left(\frac{n-p}{p}\right)^{\frac{n-p}{p(p-1)}}.
\end{equation}

Finally, use \eqref{eq:variation-lower} with $L=\sqrt t$.  Since
$\underline w(t)\to0$ as $t\to\infty$, the first term satisfies
\[
 t^{\frac{1}{q-p+1}}e^{-\sqrt t}\underline w(t-\sqrt t)\longrightarrow0.
\]
For the integral term, the monotonicity of $P_-$ gives
\begin{align*}
 &t^{\frac{1}{q-p+1}}
 \int_{t-\sqrt t}^t e^{-(t-s)}P_-(s)^{1/(p-1)}\,ds\le t^{\frac{1}{q-p+1}}(1-e^{-\sqrt t})P_-(t-\sqrt t)^{1/(p-1)}\\
 &\quad=(1-e^{-\sqrt t})
 \left(\frac{t}{t-\sqrt t}\right)^{\frac{1}{q-p+1}}
 (t-\sqrt t)^{\frac{1}{q-p+1}}P_-(t-\sqrt t)^{1/(p-1)}.
\end{align*}
By \eqref{eq:Aminus-limsup}, the last expression has limsup at most
$\left(\frac{n-p}{p}\right)^{\frac{n-p}{p(p-1)}}$.  Combining the two estimates yields \eqref{eq:lower-envelope-limsup}.
\end{proof}

We now obtain the asymptotic lower bound for the rescaled spherical maximum \(\overline w\).

\begin{lemma}[Sharp lower estimate]\label{lem:sharp-lower}
One has
\begin{equation}\label{eq:upper-envelope-liminf}
 \liminf_{t\to\infty}t^{\frac{1}{q-p+1}}\overline w(t)\ge \left(\frac{n-p}{p}\right)^{\frac{n-p}{p(p-1)}}.
\end{equation}
\end{lemma}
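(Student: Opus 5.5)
The plan is to deduce the lower bound for $\overline w$ from the lower flux $P_-$, using Lemma~\ref{lem:asymptotic-symmetry} to pass between $\underline w$ and $\overline w$, and to use the subsolution inequality \eqref{eq:flux-upper} only to control an integrated quantity.

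\textbf{Step 1: integrability and the tail quantity.} First I will show $\int_T^\infty \overline w^q\,dt<\infty$. In the original variables,
\[
\int_T^\infty \overline w^q\,dt=c\int_0^{r(T)} r^{n-1}\overline u(r)^q\,dr .
\]
By \eqref{eq:ratio-one}, $\overline u(r)\le 2\underline u(r)$ for small $r$, and $\underline u(r)^q$ is at most the spherical average of $u^q$. So the integral is bounded by $C\|u^q\|_{L^1(B_{1/2})}<\infty$ by \eqref{eq:uq-integrable}. Set
\[
Q(t):=\int_t^\infty \overline w(s)^q\,ds ,
\]
which is decreasing with $Q(t)\to0$.

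\textbf{Step 2: pointwise lower bound in terms of $Q$.} By Lemma~\ref{lem:sharp-upper}'s proof, $P_->0$ is nonincreasing, so $P_-(\infty)\ge0$ exists. Integrating \eqref{eq:flux-lower} over $(t,\infty)$ gives
\[
P_-(t)\ge \int_t^\infty \underline w^q\,ds\ge (1-o(1))\,Q(t),
\]
where the last inequality again uses \eqref{eq:ratio-one}. Combined with \eqref{eq:w-lower-A} and $L\to\infty$, this yields
\[
\overline w(t)\ge \underline w(t)\ge (1-o(1))\,Q(t)^{1/(p-1)} .
\]
Hence it suffices to prove
\[
\liminf_{t\to\infty} t^{\frac{p-1}{q-p+1}}Q(t)\ge \Bigl(\frac{p-1}{q-p+1}\Bigr)^{\frac{p-1}{q-p+1}} .
\]

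\textbf{Step 3: a differential inequality for $Q$ with a delay.} I need an upper bound on $-Q'=\overline w^q$ in terms of $Q$. From \eqref{eq:ratio-one} and \eqref{eq:variation-lower} with $L=\sqrt t$,
\[
\overline w(t)\le (1+o(1))\underline w(t)\le (1+o(1))\bigl(e^{-\sqrt t}C+P_-(t-\sqrt t)^{1/(p-1)}\bigr).
\]
I then need $P_-$ controlled from above by $Q$. Here I would use \eqref{eq:flux-upper} integrated on $(t,\infty)$, which gives $P_+(t)\le P_+(\infty)+Q(t)$, together with $P_-\le(1+o(1))P_+$. The latter should come from the $C^1_{\rm loc}$ convergence in Lemma~\ref{lem:asymptotic-symmetry}, whose blow-up limit is $A|y|^{-\frac{n-p}{p-1}}+B$ with $A=0$ forced by \eqref{eq:uniform-zero-mass}. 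Granting this, and granting that the limits at infinity vanish (which I expect from $\overline w\to0$ and $e^t\overline w$ growing only subexponentially), I would obtain
\[
-Q'(t)\le (1+o(1))\,Q(t-\sqrt t)^{q/(p-1)}+Ce^{-q\sqrt t}.
\]

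\textbf{Step 4: integration and the main obstacle.} The main obstacle is removing the delay, that is, proving $Q(t-\sqrt t)/Q(t)\to1$. I would first extract a crude power lower bound $Q(t)\ge c\,t^{-N}$ from the undelayed-type inequality above. Then, using the sharp upper bound $\overline w\le C t^{-1/(q-p+1)}$ from Lemma~\ref{lem:sharp-upper} together with \eqref{eq:ratio-one},
\[
Q(t-\sqrt t)-Q(t)\le C\sqrt t\, t^{-q/(q-p+1)} ,
\]
which is $o\bigl(t^{-(p-1)/(q-p+1)}\bigr)$. This should bootstrap the crude bound to the correct order $Q\gtrsim t^{-(p-1)/(q-p+1)}$ and hence give the ratio limit. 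Once the delay is gone, set $\delta=\frac{q}{p-1}-1$. The inequality becomes
\[
\frac{d}{dt}Q^{-\delta}\le \delta(1+o(1)),
\]
so $Q(t)^{-\delta}\le \delta(1+o(1))\,t$. Since $1/\delta=\frac{p-1}{q-p+1}$, this gives exactly the required liminf, and Step 2 then yields \eqref{eq:upper-envelope-liminf}.
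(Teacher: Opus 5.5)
Your Steps~1 and~2 are correct, but Step~3 hinges on $P_-\le(1+o(1))P_+$, and the justification you give does not work. First, \eqref{eq:uniform-zero-mass} does not force $A=0$. After dividing by $m_j$ it only yields $\lambda_j\to0$. In fact the conclusion of Theorem~\ref{thm:main} shows that the blow-up limit is $|y|^{-\frac{n-p}{p-1}}$, i.e.\ $A=1$, $B=0$. Second, $A=0$ would not help you anyway: then $\nabla U_j\to0$ locally uniformly, and you would only learn that $P_\pm=o(\overline w^{\,p-1})$ near $t_j:=t(r_j)$, which leaves the ratio $P_-/P_+$ completely uncontrolled. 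Transforming the flux through the blow-up gives $P_\pm^{1/(p-1)}=(A+o(1))\,\overline w$ near $t_j$. So $P_-/P_+\to1$ requires $A>0$ for every blow-up limit, which is equivalent to a bound $\underline w\le C\,P_-^{1/(p-1)}$ for large $t$. That is essentially the slow-variation statement the lemma is about, so Step~3 is circular as written.

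Step~4 is also missing its key input. The crude bound $Q\ge c\,t^{-N}$ cannot be extracted from the inequalities you list. All of them, including the ones you grant, hold for a regular radial solution with $U(0)>0$: there $P_-=P_+=Q\sim c\,e^{-qt}$ and $\overline w\sim U(0)e^{-t}$, and the conclusion of the lemma fails. Nonremovability must therefore enter quantitatively, as in Step~2 of the paper's proof: eventual decay $\overline w'\le-\kappa\overline w$ would give $u\le C|x|^{-\frac{n-p}{p-1}(1-\kappa)}$, and then Serrin's removability theorem applies. Moreover, even granting $Q\ge c\,t^{-N}$, your estimate $Q(t-\sqrt t)-Q(t)\le C\,t^{-\frac{p-1}{q-p+1}-\frac12}$ yields $Q(t-\sqrt t)/Q(t)\to1$ only if $N<\frac{p-1}{q-p+1}+\frac12$, so the bootstrap is unsupported.

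Your $Q$-idea can be repaired by dropping $P_-$ from Step~3 altogether. By \eqref{eq:flux-upper}, $P_+-Q$ is nondecreasing, and its limit must be $0$ (any other limit contradicts $\overline w\to0$ or $\overline w>0$). Hence $P_+\le Q$, so $\overline w'+\overline w\le Q^{1/(p-1)}$ a.e. For the Lipschitz function $Z:=\overline w/Q^{1/(p-1)}$ this gives, with no delay at all,
\[
Z'\le 1-Z+\frac{1}{p-1}Z^{q+1}Q^{\frac{p}{n-p}},\qquad (Q^{-\delta})'=\delta Z^q,\qquad \delta=\frac{q}{p-1}-1 .
\]
For large $t$ the set $\{Z\le2\}$ is forward invariant. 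It must be entered, because $Z>2$ for all large $t$ forces $\overline w'\le-\overline w/2$, which nonremovability excludes as above. Then $\limsup Z\le1$, integration gives $\liminf t^{1/\delta}Q\ge\delta^{-1/\delta}$, and your Step~2 finishes the proof. This is the same barrier mechanism as the paper's argument for $R=-\overline w'/\overline w^{\,q-p+2}$, but run on the $C^1$ quantity $Q$, so the BV jump bookkeeping disappears.
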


\begin{proof}
We have $\overline w>0$, $\overline w\in W^{1,\infty}_{\mathrm{loc}}$, and $\overline w(t)\to0$.
Fix $\varepsilon\in(0,1/2)$.
We will prove that
\begin{equation}\label{eq:good-region}
 -\frac{\overline w'(t)}{\overline w(t)^{q-p+2}}\le \frac{1}{(p-1)(1-\varepsilon)}
 \qquad\text{for almost every sufficiently large }t.
\end{equation}
\eqref{eq:upper-envelope-liminf} will then follow by integrating $(\overline w^{-(q-p+1)})'$; see Step~4.

Choose the right-continuous BV representative of $P_+$, and define
\[
 \Phi(s):=\operatorname{sgn}(s)|s|^{1/(p-1)},
 \qquad
 R(t):=\frac{\overline w(t)-\Phi(P_+(t))}{\overline w(t)^{q-p+2}}.
\]
By \eqref{eq:flux-definitions},
\begin{equation}\label{eq:sharp-lower-R-identity}
 R=-\frac{\overline w'}{\overline w^{q-p+2}}
 \qquad\text{almost everywhere}.
\end{equation}
All pointwise statements about $R$ below refer to this representative.
It is right-continuous and has finite left limits.
Moreover, \eqref{eq:flux-upper} gives $D^s P_+\ge0$, so
$P_+(t)\ge P_+(t-)$.
Since $\Phi$ is increasing and $\overline w$ is continuous,
\begin{equation}\label{eq:sharp-lower-no-upward-jumps}
 R(t)\le R(t-).
\end{equation}
Choose $\delta\in(0,1/2)$ sufficiently small that
\(
 1-(q-p+2)\delta-(1-\varepsilon)\max\{1,(1-\delta)^{2-p}\}>0,
\)
and then choose $T$ sufficiently large that
\begin{equation}\label{eq:w-small}
 \frac{\overline w(t)^{q-p+1}}{(p-1)(1-\varepsilon)}<\delta\qquad(t\ge T).
\end{equation}

\medskip
\noindent\textit{Step 1. On every interval where}
\begin{equation}\label{eq:sharp-lower-working-region}
 R>\frac{1}{(p-1)(1-\varepsilon)},\qquad R\overline w^{q-p+1}<\delta,
\end{equation}
\textit{one has}
\begin{equation}\label{eq:R-decrease}
 DR\le-c_\varepsilon\,dt,
\end{equation}
\textit{where}
\begin{equation*}
 c_\varepsilon:= \frac{1-(q-p+2)\delta-(1-\varepsilon)\max\{1,(1-\delta)^{2-p}\}} {(p-1)(1-\varepsilon)}>0.
\end{equation*}
To prove this, observe that in the range \eqref{eq:sharp-lower-working-region}, we have
\[
 \Phi(P_+)=\overline w(1-R\overline w^{q-p+1})>(1-\delta)\overline w>0.
\]
Consequently, $P_+$ stays away from zero on compact subintervals, and $R\in BV_{\mathrm{loc}}$ there.
Since \(P_+>0\) in this region and the map
\(s\mapsto s^{1/(p-1)}\) is increasing, the BV chain rule, together with
\(D^sP_+\ge0\), gives
\[
D^s\!\left(P_+^{1/(p-1)}\right)\ge0.
\]
Moreover, \(\overline w\in W^{1,\infty}_{\mathrm{loc}}\), so \(\overline w\) and
\(\overline w^{-(q-p+2)}\) have no singular derivative. Hence
\[
D^sR
=
-\overline w^{-(q-p+2)}
D^s\!\left(P_+^{1/(p-1)}\right)
\le0.
\]
For the absolutely continuous part, differentiation and
\eqref{eq:sharp-lower-R-identity} give
\[
 P_+'=-(p-1)\overline w^q(1-R\overline w^{q-p+1})^{p-2}
       \bigl[R'+R-(q-p+2)R^2\overline w^{q-p+1}\bigr].
\]
Since $P_+'\ge-\overline w^q$ almost everywhere by \eqref{eq:flux-upper},
\begin{align*}
 R'
 &\le [-1+(q-p+2)R\overline w^{q-p+1}]R
       +\frac{1}{p-1}(1-R\overline w^{q-p+1})^{2-p}\\
 &\le [-1+(q-p+2)\delta]\frac{1}{(p-1)(1-\varepsilon)}+\frac{\max\{1,(1-\delta)^{2-p}\}}{p-1}
 =-c_\varepsilon.
\end{align*}
Together with $D^sR\le0$, this proves \eqref{eq:R-decrease}.
In particular, while \eqref{eq:sharp-lower-working-region} holds, both $R$ and $\overline w^{q-p+1}$ are positive and nonincreasing.

\medskip
\noindent\textit{Step 2.  $R\le 1/[{(p-1)(1-\varepsilon)}]$ is reached in finite time.}

We first use nonremovability to find a starting point.
For every $\kappa>0$, it is impossible that
\begin{equation}\label{eq:eventual-exponential}
 \overline w'(t)\le-\kappa \overline w(t)
 \qquad\text{for almost every sufficiently large }t.
\end{equation}
Indeed, this would imply $\overline w(t)\le Ce^{-\kappa t}$.
If $0<\kappa<1$, the change of variables \eqref{eq:EF}--\eqref{eq:wpm}
would give
\[
 u(x)\le\overline u(|x|)
 \le C|x|^{-\frac{n-p}{p-1}(1-\kappa)}.
\]
Hence, $u^{q(1+\theta)}$ would be locally integrable for some $\theta>0$,
and \cite[Theorem~10]{Serrin1964} would make the singularity removable.
If $\kappa\ge1$, $u$ would be bounded near zero and hence removable by
Lemma~\ref{lem:dichotomy}.
Both conclusions contradict the standing nonremovability assumption.

Applying \eqref{eq:eventual-exponential} and \eqref{eq:sharp-lower-R-identity} with $\kappa=\delta$, we may choose $t_0>T$
such that
\[
 R(t_0)\overline w(t_0)^{q-p+1}<\delta.
\]
If $R(t_0)\le 1/[{(p-1)(1-\varepsilon)}]$, there is nothing to prove in this step.
Suppose instead that $R(t_0)>1/[{(p-1)(1-\varepsilon)}]$, and let $[t_0,\tau)$ be the maximal
right interval on which \eqref{eq:sharp-lower-working-region} holds.
By Step~1, for $t_0\le t<\tau$,
\begin{equation}\label{eq:sharp-lower-product-control}
 R(t)\overline w(t)^{q-p+1}\le R(t_0)\overline w(t_0)^{q-p+1}<\delta,
\end{equation}
and
\[
 R(t)\le R(t_0)-c_\varepsilon(t-t_0).
\]
The latter estimate cannot hold together with $R>\frac{1}{(p-1)(1-\varepsilon)}$ on an infinite interval,
so $\tau<\infty$.
By \eqref{eq:sharp-lower-no-upward-jumps}, continuity of $\overline w$, and
\eqref{eq:sharp-lower-product-control},
\[
 R(\tau)\overline w(\tau)^{q-p+1}
 \le R(\tau-)\overline w(\tau)^{q-p+1}
 \le R(t_0)\overline w(t_0)^{q-p+1}<\delta.
\]
If $R(\tau)>1/[{(p-1)(1-\varepsilon)}]$, right
continuity would extend the interval beyond $\tau$.
This contradicts maximality. Therefore $R(\tau)\le 1/[{(p-1)(1-\varepsilon)}]$.
We have thus found $s>T$ with $R(s)\le 1/[{(p-1)(1-\varepsilon)}]$.

\medskip
\noindent\textit{Step 3. Once $R\le 1/[{(p-1)(1-\varepsilon)}]$ holds, it continues to hold for all later times.}

The set
\[
 A:=\{t>T:R(t)>\frac{1}{(p-1)(1-\varepsilon)}\}
\]
is open.
Indeed, if \(t_0\) belongs to the set, then right continuity preserves the strict inequality for \(t>t_0\) sufficiently close to \(t_0\).
Moreover, since $R(t-)\ge R(t)>1/[{(p-1)(1-\varepsilon)}]$, the existence of the left limit preserves the strict inequality for \(t<t_0\) sufficiently close to \(t_0\).

Suppose that $R(t)> 1/[{(p-1)(1-\varepsilon)}]$ at some time after $s$, and choose a connected component \((a,b)\) contained in \(A\cap(s,\infty)\).
Then
\[
 R(a)=\frac{1}{(p-1)(1-\varepsilon)}.
\]
Indeed, if $a=s$, then $R(s)=R(a)\le 1/[{(p-1)(1-\varepsilon)}]$; if $a>s$, then $R(a)\le 1/[{(p-1)(1-\varepsilon)}]$ follows from maximality of the component; $R(a)\ge 1/[{(p-1)(1-\varepsilon)}]$ follows from right continuity and
$R>1/[{(p-1)(1-\varepsilon)}]$ on $(a,b)$.
By \eqref{eq:w-small},
\[
R(a)\overline w(a)^{q-p+1}=\frac{\overline w(a)^{q-p+1}}{(p-1)(1-\varepsilon)}<\delta.
\]
Right continuity therefore gives $h>0$, with $a+h<b$, such that
\eqref{eq:sharp-lower-working-region} holds on $(a,a+h)$.
Integrating \eqref{eq:R-decrease} from $a'>a$ to $t\in(a',a+h)$ and
then letting $a'\downarrow a$, we obtain
\[
 R(t)\le \frac{1}{(p-1)(1-\varepsilon)}-c_\varepsilon(t-a)<\frac{1}{(p-1)(1-\varepsilon)},
\]
contrary to $(a,a+h)\subset A$.
Thus $R(t)\le 1/[{(p-1)(1-\varepsilon)}]$ for all $t\ge s$.

 \eqref{eq:good-region} is the direct consequence of Step 3.

\medskip
\noindent\textit{Step 4. Integrate to obtain \eqref{eq:upper-envelope-liminf}.}
By \eqref{eq:sharp-lower-R-identity},
\[
 (\overline w^{-(q-p+1)})'=(q-p+1)R\le \frac{q-p+1}{(p-1)(1-\varepsilon)}
 \qquad\text{almost everywhere on }(s,\infty).
\]
Since $\overline w^{-(q-p+1)}$ is locally absolutely continuous,
\[
 \overline w(t)^{-(q-p+1)}\le \overline w(s)^{-(q-p+1)}+\frac{q-p+1}{(p-1)(1-\varepsilon)}(t-s).
\]
It follows that
\[
 \liminf_{t\to\infty}t^{\frac{1}{q-p+1}}\overline w(t)
 \ge\left(\frac{q-p+1}{(p-1)(1-\varepsilon)}\right)^{-\frac{1}{q-p+1}}
 =\left(\frac{(p-1)(1-\varepsilon)}{q-p+1}\right)^{\frac{1}{q-p+1}}.
\]
Letting $\varepsilon\downarrow0$ proves \eqref{eq:upper-envelope-liminf}.
\end{proof}


\begin{proof}[Proof of Theorem~\ref{thm:main}]
If the singularity is removable, alternative~\textup{(i)} has already
been established in Lemma~\ref{lem:dichotomy}.  Assume that it is
nonremovable.  Lemmas~\ref{lem:sharp-upper} and~\ref{lem:sharp-lower} give
\[
 \limsup_{t\to\infty}t^{\frac{1}{q-p+1}}\underline w(t)\le \left(\frac{n-p}{p}\right)^{\frac{n-p}{p(p-1)}}
 \quad \text{and} \quad
 \liminf_{t\to\infty}t^{\frac{1}{q-p+1}}\overline w(t)\ge \left(\frac{n-p}{p}\right)^{\frac{n-p}{p(p-1)}}.
\]
By Lemma~\ref{lem:asymptotic-symmetry},
\[
 \frac{\overline w(t)}{\underline w(t)}
 =\frac{\overline u(r)}{\underline u(r)}\longrightarrow1.
\]
Therefore
\begin{equation}\label{eq:w-exact}
 \lim_{t\to\infty}t^{\frac{1}{q-p+1}}\underline w(t)
 =
 \lim_{t\to\infty}t^{\frac{1}{q-p+1}}\overline w(t)
 =\left(\frac{n-p}{p}\right)^{\frac{n-p}{p(p-1)}}.
\end{equation}
For $|x|=r$,
\[
 e^t\underline w(t)\le u(x)\le e^t\overline w(t).
\]
Since
\[
 e^t=\left(\frac{n-p}{(p-1)r}\right)^{\frac{n-p}{p-1}},
 \qquad
  t=\frac{n-p}{p-1}\log\frac{n-p}{(p-1)r},
\]
we have
\[
 \frac{\log(1/r)}{t}
 \to \frac{p-1}{n-p}
 \quad \text{as } r\downarrow0.
\]
For the lower spherical minimum, \eqref{eq:w-exact} gives
\begin{align*}
  r^{\frac{n-p}{p-1}}
  \left(\log\frac1r\right)^{\frac{n-p}{p(p-1)}}e^t\underline w(t)
  &=\left(\frac{n-p}{p-1}\right)^{\frac{n-p}{p-1}}
    \left(\frac{\log(1/r)}{t}\right)^{\frac{n-p}{p(p-1)}}
    \bigl[t^{\frac{1}{q-p+1}}\underline w(t)\bigr]\\
  &\longrightarrow
  \left(\frac{n-p}{p-1}\right) ^{\frac{n-p}{p}} \left(\frac{n-p}{p}\right)^{\frac{n-p}{p(p-1)}}  =A_{n,p}.
\end{align*}
The same limit holds with $\underline w$ replaced by $\overline w$.
The squeeze theorem yields \eqref{eq:final-limit}. This completes the proof of Theorem~\ref{thm:main}.
\end{proof}

%

\section{Proof of Theorem~\ref{prop:radial-existence}}
\label{sec:radial-existence}

\begin{proof}
We use the change of variables \eqref{eq:EF}, written as
\[
 t=\frac{n-p}{p-1}\log\frac{n-p}{(p-1)r},
 \qquad r=\frac{n-p}{p-1}e^{-\frac{p-1}{n-p}t},
 \qquad U(r)=e^t w(t).
\]
Thus $0<r\le1$ corresponds to $t \ge  t_0:= \frac{n-p}{p-1} \log\frac{n-p}{p-1}$.
%
By assumption, $U$ solves the radial equation
\begin{equation}\label{eq:43}
 -\bigl(r^{n-1}|U'|^{p-2}U'\bigr)'=r^{n-1}U^q.
\end{equation}
We restrict our
construction to profiles satisfying $w+w'>0$, which is
equivalent to $U'(r)<0$ under the above change of variables.
Set
\begin{equation}\label{eq:44}
 P:=(w+w')^{p-1}.
\end{equation}
Since $\frac{n-p}{p-1}q=n$, direct calculation gives
\begin{align*}
 U'(r)&=-\frac{n-p}{(p-1)r}e^t P^{\frac1{p-1}},\\
 r^{n-1}|U'|^{p-2}U'&=-\left(\frac{n-p}{p-1}\right)^{n-1}P,\\
 r^nU^q&=\left(\frac{n-p}{p-1}\right)^nw^q.
\end{align*}
Thus, \eqref{eq:43} and \eqref{eq:44} are equivalent to
\begin{equation}\label{eq:existence-system}
 \begin{cases}
  w'=P^{\frac1{p-1}}-w,\\
  P'=-w^q.
 \end{cases}
\end{equation}
Conversely, every positive solution $(w,P)$ of
\eqref{eq:existence-system} on $[t_0,\infty)$ determines a positive,
strictly decreasing radial solution of \eqref{eq:main}.
Introduce
\[
 Z:=\frac{w}{P^{\frac1{p-1}}}.
\]
We obtain the equivalent system
\begin{equation}\label{eq:existence-ratio-system}
 \begin{cases}
  P'=-Z^qP^{\frac{n}{n-p}},\\
  Z'=1-Z+\frac1{p-1}Z^{q+1}P^{\frac{p}{n-p}}.
 \end{cases}
\end{equation}
%
Choose $\varepsilon=\varepsilon(n,p)>0$ sufficiently small that
\[
 \frac{2^{q+1}}{p-1}\varepsilon^{\frac{p}{n-p}}\le\frac12,
\]
and consider
\[
 \mathcal D:=\{(P,Z):0<P\le\varepsilon,\ 1\le Z\le2\}.
\]
On the boundary $Z=1$, the second equation in
\eqref{eq:existence-ratio-system} gives
\[
 Z'=\frac1{p-1}P^{\frac{p}{n-p}}>0,
\]
whereas on $Z=2$ it gives
\[
 Z'=-1+\frac{2^{q+1}}{p-1}P^{\frac{p}{n-p}}\le-\frac12<0.
\]
Moreover, $P'<0$ throughout $\mathcal D$. Hence, a trajectory cannot
leave $\mathcal D$ through $Z=1$, $Z=2$, or $P=\varepsilon$; see Figure~\ref{fig:existence-invariant-region}.

\begin{figure}[htbp]
 \centering
 \includegraphics[width=0.55\textwidth]{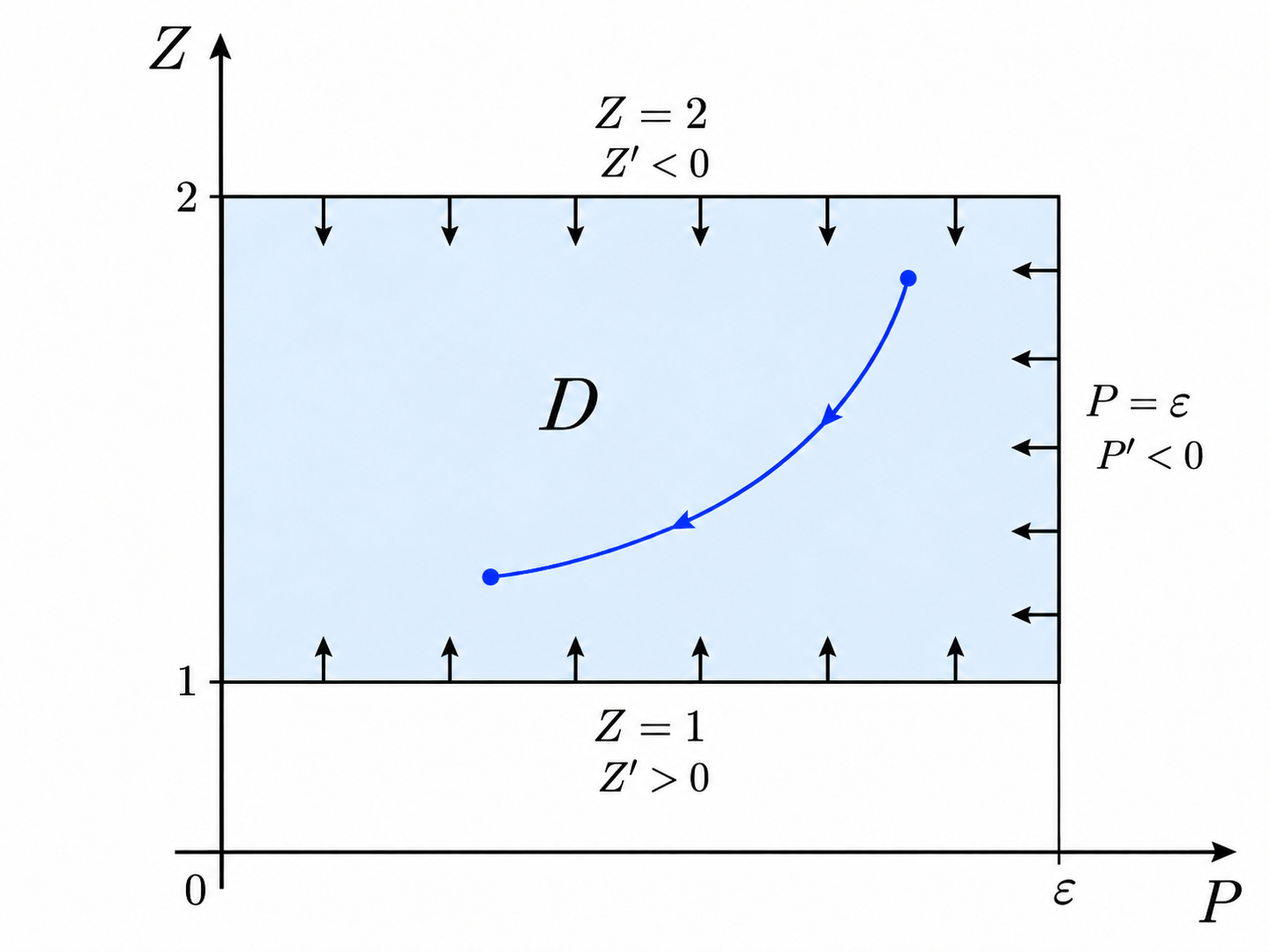}
 \caption{The region $\mathcal D$ in the $(P,Z)$-plane. The blue curve is a schematic trajectory.}
 \label{fig:existence-invariant-region}
\end{figure}

Fix $P_0\in(0,\varepsilon)$ and impose the initial conditions
\[
 P(t_0)=P_0,
 \qquad Z(t_0)=\frac32.
\]
The vector field in \eqref{eq:existence-ratio-system} is smooth in the
positive quadrant, so these data determine a unique maximal solution.
As long as this solution remains in the positive quadrant, the preceding
boundary inequalities imply
\begin{equation}\label{eq:existence-trapping}
 0<P(t)\le P_0,
 \qquad 1\le Z(t)\le2.
\end{equation}
Furthermore,
\begin{equation}\label{eq:49}
   \frac{d}{dt}P^{-\frac{p}{n-p}}=\frac{p}{n-p}Z^q\in (\frac{p}{n-p},\frac{p2^q}{n-p}),
\end{equation}
so integration yields
\begin{equation}\label{eq:existence-bounds}
 P_0^{-\frac{p}{n-p}}+\frac{p}{n-p}(t-t_0)
 \le P(t)^{-\frac{p}{n-p}}
 \le P_0^{-\frac{p}{n-p}}+\frac{p\,2^q}{n-p}(t-t_0).
\end{equation}
In particular,
\begin{equation}\label{50}
   P(t)\ge
 \left[P_0^{-\frac{p}{n-p}}+\frac{p\,2^q}{n-p}(t-t_0)\right]^{-\frac{n-p}{p}}>0
\end{equation}
for every finite $t$ in the interval of existence. If the maximal forward
existence time $T_{\max}$ of the solution $(P,Z)$ to \eqref{eq:existence-ratio-system}
were finite, \eqref{50} and
\eqref{eq:existence-trapping} would keep $(P(t),Z(t))$, for every
$t\in[t_0,T_{\max})$, in the compact range
\[
 \left[
  \left(P_0^{-\frac{p}{n-p}}
   +\frac{p\,2^q}{n-p}(T_{\max}-t_0)\right)^{-\frac{n-p}{p}},
  P_0
 \right]\times[1,2].
\]
The standard continuation theorem for
ordinary differential equations would then extend the solution beyond
that time, a contradiction. Thus, the solution exists on
$[t_0,\infty)$, and $(P(t),Z(t))\in \mathcal D$ for all $t\ge t_0$.
The left-hand inequality in \eqref{eq:existence-bounds} also gives
\[
 P(t) \to 0
 \quad \text{as } t\to\infty.
\]
We recover a positive solution $w(t)=Z(t)P^\frac{1}{p-1}(t)$ of
\eqref{eq:existence-system}, with $w(t)\to0$.

Write the second equation in \eqref{eq:existence-ratio-system} as
\[
 Z'+Z=1+F(t),
 \qquad
 F(t):=\frac1{p-1}Z(t)^{q+1}P(t)^{\frac{p}{n-p}}.
\]
We have $F(t)\to0$.
For $t\ge T\ge t_0$, the variation-of-constants formula gives
\[
 Z(t)-1
 =e^{-(t-T)}(Z(T)-1)
  +\int_T^t e^{-(t-s)}F(s)\,ds.
\]
Consequently,
\[
 0\le\limsup_{t\to\infty}(Z(t)-1)
 \le\sup_{s\ge T}F(s).
\]
Letting $T\to\infty$, we conclude that $Z(t)\to1$.
By \eqref{eq:49} and l'H\^{o}pital's rule, we find
\[
 \frac{P(t)^{-\frac{p}{n-p}}}{t}\to \frac{p}{n-p}\qquad \text{as } t\to\infty.
\]
It follows that
\begin{equation}\label{eq:existence-profile-asymptotics}
 P(t)\sim\left(\frac{p}{n-p}t\right)^{-\frac{n-p}{p}},
 \quad
 w(t)=Z(t)P(t)^{\frac1{p-1}}
 \sim\left(\frac{p}{n-p}t\right)^{-\frac{n-p}{p(p-1)}}
 \quad \text{as } t\to\infty.
\end{equation}
%
The vector field in \eqref{eq:existence-system} is smooth when $w,P>0$, so $u(x):=U(|x|)$ belongs to $C^\infty(B_1\setminus\{0\})$.
We have
\[
 r^{\frac{n-p}{p-1}}e^{t(r)}
 =\left(\frac{n-p}{p-1}\right)^{\frac{n-p}{p-1}},
 \quad
 \frac{\log(1/r)}{t(r)}\to \frac{p-1}{n-p}
 \quad \text{as } r\downarrow0.
\]
Moreover, since $t(r)\to\infty$ as $r\downarrow0$,
\eqref{eq:existence-profile-asymptotics} gives
\[
 t(r)^{\frac{n-p}{p(p-1)}}w(t(r))
 \to
 \left(\frac{p}{n-p}\right)^{-\frac{n-p}{p(p-1)}}.
\]
Thus,
\begin{align*}
 \lim_{r\downarrow0}r^{\frac{n-p}{p-1}}
 \left(\log\frac1r\right)^{\frac{n-p}{p(p-1)}}U(r)
 &=\lim_{r\downarrow0}\left[
    r^{\frac{n-p}{p-1}}e^{t(r)}
    \left(\frac{\log(1/r)}{t(r)}\right)^{\frac{n-p}{p(p-1)}}
    t(r)^{\frac{n-p}{p(p-1)}}w(t(r))
   \right]\\
 &=\left(\frac{n-p}{p-1}\right)^{\frac{n-p}{p-1}}
   \left(\frac{p-1}{n-p}\right)^{\frac{n-p}{p(p-1)}}
   \left(\frac{p}{n-p}\right)^{-\frac{n-p}{p(p-1)}}\\
 &=A_{n,p}.
\end{align*}
In particular, $U(r)\to\infty$ as $r\downarrow0$. This completes the proof of Theorem~\ref{prop:radial-existence}.
\end{proof}



\medskip
\noindent\textbf{Acknowledgements} Part of this work was completed while H. Yang was visiting The Chinese University of Hong Kong. He would like to thank Professor Juncheng Wei for his support and encouragement, as well as the Department of Mathematics and the Institute of Mathematical Sciences for their hospitality. The research of M. Xu is partially supported by the National Key R\&D
Program of China 2025YFA1017600 and NSFC 12526202. The research of H. Yang is supported by NSFC 12301140 and the Shanghai Frontier Science Center of Modern Analysis.

\bigskip
\begingroup
\hypersetup{urlcolor=black}
\noindent\begin{minipage}{\textwidth}
M.~Xu\\
School of Mathematical Sciences, Fudan University\\
Shanghai 200433, China\\
Email: \href{mailto:meiqing_xu@fudan.edu.cn}{\textsf{meiqing\_xu@fudan.edu.cn}}
\end{minipage}

\bigskip

\noindent\begin{minipage}{\textwidth}
H.~Yang\\
School of Mathematical Sciences, Shanghai Jiao Tong University\\
Shanghai 200240, China\\
Email: \href{mailto:hui-yang@sjtu.edu.cn}{\textsf{hui-yang@sjtu.edu.cn}}
\end{minipage}
\endgroup

\end{document}